\newtheorem{theorem}{Theorem}[section]
\newtheorem{lemma}[theorem]{Lemma}
\newtheorem{cor}[theorem]{Corollary}
\theoremstyle{definition}
\newtheorem{de}[theorem]{Definition}
\theoremstyle{definition}
\newtheorem{remark}[theorem]{Remark}
\theoremstyle{proposition}
\numberwithin{equation}{section}
\def\eq#1{(\ref{#1})}
\def\N{\mathbb{N}}
\def\R{\mathbb{R}}
\begin{document}

\title{Global and blow-up radial solutions for quasilinear elliptic systems arising in
the study of viscous, heat conducting fluids}


\author{Marius Ghergu\footnote{School of Mathematics and Statistics,
    University College Dublin, Belfield, Dublin 4, Ireland; {\tt
      marius.ghergu@ucd.ie}}$\,$ \footnote{Institute of Mathematics Simion Stoilow of the Romanian Academy, 21 Calea Grivitei Street,
010702 Bucharest, Romania} $\,,\;\,$
Jacques Giacomoni \footnote{LMAP (UMR E2S-UPPA CNRS 5142) Universit\'e de Pau et des Pays de l'Adour, Avenue de l'Universit\'e, F-64013 Pau cedex, France; {\tt
      jacques.giacomoni@univ-pau.fr}}  $\;$ and $\;$
Gurpreet Singh\footnote{School of Mathematics,
    Trinity College Dublin, Dublin 2, Ireland; {\tt
      gurpreet.bajwa2506@gmail.com}}
}




\maketitle

\begin{abstract}
We study positive radial solutions of quasilinear elliptic systems with a gradient term in the form
$$
\left\{
\begin{aligned}
\Delta_{p} u&=v^{m}|\nabla u|^{\alpha}&&\quad\mbox{ in }\Omega,\\
\Delta_{p} v&=v^{\beta}|\nabla u|^{q} &&\quad\mbox{ in }\Omega,
\end{aligned}
\right.
$$
where $\Omega\subset\R^N$ $(N\geq 2)$ is either a ball or the whole space, $1<p<\infty$, $m, q>0$, $\alpha\geq 0$, $0\leq \beta\leq  m$ and $(p-1-\alpha)(p-1-\beta)-qm\neq  0$. We first classify all the positive radial solutions in case $\Omega$ is a ball, according to their behavior at the boundary. Then we obtain that the system has non-constant global solutions if and only if $0\leq \alpha<p-1$ and $mq<  (p-1-\alpha)(p-1-\beta)$. Finally, we describe the precise behavior at infinity for such positive global radial solutions by using properties of three component cooperative and irreducible dynamical systems.
\end{abstract}

\noindent{\bf Keywords:} Radial symmetric solutions, $p$-Laplace
operator;  asymptotic behavior, cooperative and irreducible
dynamical systems

\medskip

\noindent{\bf 2010 AMS MSC:} 35J47, 35J92, 35B40, 70G60


\section{Introduction and the main results}

In this paper we investigate positive radial solutions for quasilinear elliptic systems of the form
\begin{equation}\label{sys0}
\left\{
\begin{aligned}
\Delta_{p} u&=v^{m}|\nabla u|^{\alpha}&&\quad\mbox{ in }\Omega,\\
\Delta_{p} v&=v^{\beta}|\nabla u|^{q} &&\quad\mbox{ in }\Omega,
\end{aligned}
\right.
\end{equation}
where $1<p<\infty$, $\Delta_{p} u= {\rm div}(|\nabla u|^{p-2}\nabla u)$ stands for the standard $p$-Laplace operator  and
$\Omega\subset \R^N$ ($N\geq 2$) is either a ball $B_R$ centered at the origin and having radius $R>0$, or the whole space. The exponents in \eqref{sys0} satisfy
$$
1<p<\infty,\;\; \;\;m, \;q>0,\;\;\; \alpha\geq 0,\;\; \;  0\leq \beta\leq  m,
$$
and
\begin{equation}\label{deltaa}
\delta:= (p-1-\alpha)(p-1-\beta)-qm\neq  0.
\end{equation}

In the semilinear case $p=2$, $\alpha=\beta=0$, $m=1$, $q=2$, system
\eqref{sys0} was introduced by D\'iaz, Lazzo and Schmidt
\cite{DLS2005} as a prototype model in the study of dynamics of a
viscous, heat-conducting fluid. Considering a unidirectional flow,
independent of distance in the flow direction, the speed $u$ and the
temperature $\theta$ satisfy the coupled equations
\begin{equation}\label{diaz1}
\left\{
\begin{aligned}
u_t-\Delta u &=\theta&&\quad\mbox{ in }\Omega,\\
\theta_t-\Delta \theta &=|\nabla u|^2&&\quad\mbox{ in }\Omega.
\end{aligned}
\right.
\end{equation}
The source terms $\theta$ and $|\nabla u|^2$ represent the buoyancy
force and viscous heating, respectively. With the change of variable
$v=-\theta$, steady states of \eqref{diaz1} satisfy
\begin{equation}\label{diaz2}
\left\{
\begin{aligned}
\Delta u &=v&&\quad\mbox{ in }\Omega,\\
\Delta v &=|\nabla u|^2&&\quad\mbox{ in }\Omega,
\end{aligned}
\right.
\end{equation}
which is the semilinear version of \eqref{sys0} in the particular
case $p=2$, $\alpha=\beta=0$, $m=1$ and $q=2$. In \cite{DLS2005} was
obtained that system \eqref{diaz2} admits a positive solution which
blows up at the boundary of a ball; such a solution is also unique
for fixed data. Further, it was observed in \cite{DLS2005} that in
case of small dimensions $ N\leq 9$ there also exists a boundary
blow-up solution of \eqref{diaz2} that changes sign. The study in
\cite{DLS2005} was then carried over to time dependent systems in
\cite{DRS2007, DRS2008}. Recently Singh \cite{S2015}, Filippucci and
Vinti \cite{FV2017} extended the study of positive radial solutions
in \cite{DLS2005} to more general class of nonlinearities.

Recent results \cite{BFP2015, F2011, F2013} have discussed the
existence and nonexistence of positive solutions for systems of
inequalities of the above type in the frame of general quasilinear
differential operators. Quasilinear elliptic systems without
gradient terms have been extensively investigated in the last three
decades; see, e.g., the results in \cite{ACM2002, BV2000, BVG2010,
BVGr1999, BVP2001, CFMT2000}.

In this paper we study {\it non-constant positive radial solutions} of \eqref{sys0}, that is, solutions $(u,v)$ which fulfill:
\begin{itemize}
\item $u, v\in C^2(\Omega)$ are positive and radially symmetric;
\item $u$ and $v$ are not constant in any neighbourhood of the origin;
\item $u$ and $v$ satisfy \eqref{sys0}.
\end{itemize}
If $\Omega=\R^N$, solutions of \eqref{sys0} will be called {\it global solutions}.

The presence of the gradient terms $|\nabla u|^\alpha$ and $|\nabla
u|^q$  in the right-hand side entails a rich structure of the
solution set of \eqref{sys0} which we aim to investigate in the
following. Throughout this work, we identify radial solutions
$(u,v)$ by their one variable representant, that is, $u(x)=u(r)$,
$v(x)=v(r)$, $r=|x|$. In the following, for a function $f:(0,R)\to
\R$ we denote $f(R^-)=\lim_{r\nearrow R} f(r)$, provided such a
limit exists.

In our first result below we classify all non-constant positive radial solutions in a ball $B_R$ according to
their behavior at the boundary. We have:

\begin{theorem}\label{thm1}
Assume $\Omega=B_R$, $1<p<\infty$, $m,q>0$, $0\leq \alpha<p-1$, $0\leq \beta\leq m$ and $\delta\neq 0$. Then
\begin{enumerate}
\item [(i)] There are no positive radial solutions $(u, v)$ with $u(R^{-})=\infty$ and $v(R^{-})<\infty$.
\item [(ii)] All positive radial solutions of \eq{sys0} are bounded if and only if
$$
mq< (p-1-\alpha)(p-1-\beta).
$$
\item [(iii)] There are positive radial solutions $(u, v)$ of \eq{sys0} with $u(R^{-})<\infty$ and $v(R^{-})=\infty$
if and only if
$$
mq> mp+(p-\alpha)(p-1-\beta).
$$
\item [(iv)] There are positive radial solutions $(u, v)$ of \eq{sys0} with $u(R^{-})= v(R^{-})=\infty$ if and only if
$$
(p-1-\alpha)(p-1-\beta)< mq\leq mp+(p-\alpha)(p-1-\beta).
$$
\end{enumerate}

\end{theorem}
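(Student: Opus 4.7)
The plan is to work with the radial ODE form of the system
$(r^{N-1}(u')^{p-1})'=r^{N-1}v^m(u')^\alpha$ and
$(r^{N-1}(v')^{p-1})'=r^{N-1}v^\beta(u')^q$,
observe that the right-hand sides are non-negative and vanish at $r=0$ so that $u$ and $v$ are non-decreasing, and then classify the possible behaviours at $r=R^-$ via integral a priori estimates combined with the power ansatz $u\sim A(R-r)^{-a}$, $v\sim B(R-r)^{-b}$.

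For (i), I would integrate the first equation and use monotonicity of $u'$ to obtain
$r^{N-1}u'(r)^{p-1}\le v(R^-)^m(r^N/N)u'(r)^\alpha$, hence
$u'(r)^{p-1-\alpha}\le Cr$; since $\alpha<p-1$ this bounds $u'$ on $[0,R)$ and contradicts $u(R^-)=\infty$. Iterating the same chain of estimates yields in general $u'(r)\le Cv(r)^{m/(p-1-\alpha)}$, which plugged into the analogous bound for the second equation gives
$$
v'(r)\le Cv(r)^\gamma,\qquad \gamma=\frac{\beta(p-1-\alpha)+qm}{(p-1)(p-1-\alpha)}.
$$
The hypothesis $qm<(p-1-\alpha)(p-1-\beta)$ is precisely $\gamma<1$, and a separation-of-variables argument then prevents $v$, and hence $u$, from blowing up, proving the ``if'' direction of (ii).

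The exponents entering (iii) and (iv) are pinned down by inserting the ansatz into the system and matching leading orders as $r\to R^-$, which gives the linear system $(p-1-\alpha)(a+1)-mb=-1$, $q(a+1)-(p-1-\beta)b=p$, with determinant $-\delta$. Cramer's rule yields
$$
a+1=\frac{mp+p-1-\beta}{-\delta},\qquad b=\frac{p(p-1-\alpha)+q}{-\delta}.
$$
Both numerators are strictly positive under the hypotheses, so $b>0$ (i.e.\ $v$ blows up) iff $-\delta>0$, i.e.\ $qm>(p-1-\alpha)(p-1-\beta)$; within this regime, a short algebraic manipulation shows $a<0$ (i.e.\ $u$ stays bounded) iff $qm>mp+(p-\alpha)(p-1-\beta)$, which is the exponent range of (iii), and $a\ge 0$ (i.e.\ $u$ also blows up, with $a=0$ producing logarithmic blow-up included in (iv)) iff $qm\le mp+(p-\alpha)(p-1-\beta)$, the exponent range of (iv).

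The ``only if'' directions of (ii)--(iv) can then be upgraded from the formal matching to rigorous two-sided estimates by feeding the power profiles back into the integral identities for $u'$ and $v'$, which forces the sign of $a$ and $b$ (and hence the case) to be determined by $(m,q,p,\alpha,\beta)$. The main obstacle is the ``if'' (existence) direction of (iii) and (iv): one must actually construct solutions realising each boundary profile. I would attack this by a shooting argument over initial data $(s,t)=(u(0),v(0))\in(0,\infty)^2$, using local existence for the degenerate radial ODE and continuous dependence of the maximal radius of existence on $(s,t)$ to show that in the appropriate exponent regime both $\{(s,t):\text{solution blows up before }R\}$ and $\{(s,t):\text{solution extends past }R\}$ are non-empty. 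The common boundary then consists of solutions living exactly on $[0,R)$ with a boundary profile, and comparison with super/subsolutions built from the power ansatz identifies whether (iii) or (iv) occurs according to the regime.
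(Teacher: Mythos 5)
Your treatment of part (i) and of the boundedness direction of (ii) is sound and essentially matches the paper's: integrating the radial equations gives $u'\le Cv^{m/(p-1-\alpha)}$ and then $v'\le Cr^{1/(p-1)}v^{\gamma}$ with $\gamma<1$ precisely when $mq<(p-1-\alpha)(p-1-\beta)$, and separation of variables closes that case. Your exponent arithmetic for the power ansatz is also correct. However, two essential steps are missing. First, the existence of blow-up solutions (the ``only if'' of (ii) and the ``if'' of (iii)--(iv)) is not delivered by your shooting argument, because you never show that the set of initial data whose solution blows up before $R$ is non-empty; worse, the system has the scaling invariance $u_\lambda(x)=\lambda^{1+\frac{p(m+1)-(1+\beta)}{\delta}}u(x/\lambda)$, $v_\lambda(x)=\lambda^{\frac{p(p-1-\alpha)+q}{\delta}}v(x/\lambda)$, so the maximal existence radius is either identically infinite or takes every value in $(0,\infty)$, and shooting in $(u(0),v(0))$ cannot by itself manufacture blow-up. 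The missing tool is a Keller--Osserman type inequality valid on the \emph{whole} interval of existence: with $z=(u')^{p-1-\alpha}$, combining $z'\le v^m$ with the lower bound for $[(v')^{p-1}]'$ and integrating twice yields $z'z^{-\sigma}\ge C>0$ on all of $(0,R)$, where
$$
\sigma=\frac{m}{p-1-\alpha}\cdot\frac{q+p(p-1-\alpha)}{mp+p-1-\beta},
$$
and $\sigma>1$ is equivalent to $mq>(p-1-\alpha)(p-1-\beta)$; this forces every solution to have finite maximal radius in that regime, part (i) then identifies $v$ as the component that must blow up, and scaling places the blow-up at any prescribed $R$.

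Second, your ``upgrade'' of the formal matching to rigorous two-sided estimates is only asserted, and it is exactly where the work lies. To decide whether $u(R^-)$ is finite one needs matching bounds $C_1\le z'z^{-\sigma}\le C_2$ near $R$ for \emph{every} solution with $v(R^-)=\infty$: the lower bound is the Osserman inequality above, while the upper bound requires the reverse estimate bounding $v$ from above by a power of $z$, obtained by multiplying $[(v')^{p-1}]'\le v^{\beta}z^{q/(p-1-\alpha)}$ by $v'$ and integrating, a step that genuinely uses $v(R^-)=\infty$. Only with both bounds does $z(r)\asymp (R-r)^{-1/(\sigma-1)}$ follow, after which $u(R^-)<\infty$ if and only if $\int^R(R-t)^{-\frac{1}{(\sigma-1)(p-1-\alpha)}}dt<\infty$, i.e. $\sigma>\frac{p-\alpha}{p-1-\alpha}$, which is $mq>mp+(p-\alpha)(p-1-\beta)$. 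Your one-sided chain of inequalities cannot exclude, say, a solution in the regime of (iii) whose $u$ nevertheless blows up, so the classification remains unproved as written.
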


Our next result concerns the existence of non-constant global positive radial solutions of \eqref{sys0}.
We obtain the following optimal result:

\begin{theorem}\label{thm2}
Assume $\Omega=\R^N$, $p>1$, $m,q>0$, $\alpha\geq 0$, $0\leq \beta\leq m$ and $\delta\neq 0$.
Then, \eqref{sys0} admits non-constant global positive radial solutions if and only if
\begin{equation}\label{nonct}
0\leq \alpha<p-1\quad\mbox{ and }\quad mq< (p-1-\alpha)(p-1-\beta).
\end{equation}
\end{theorem}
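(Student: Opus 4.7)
The plan is to prove the two implications separately, with Theorem~\ref{thm1}(ii) used crucially in the sufficiency direction.

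\emph{Sufficiency.} Assume $0\leq\alpha<p-1$ and $mq<(p-1-\alpha)(p-1-\beta)$. Pick any initial data $u(0)=u_0>0$, $v(0)=v_0>0$ and use the integrated radial form of \eqref{sys0},
\[
r^{N-1}|u'(r)|^{p-2}u'(r)=\int_0^r s^{N-1}v(s)^m|u'(s)|^\alpha\,ds
\]
(and its analogue for $v$) to produce a local non-constant positive radial solution on a maximal interval $[0,R^*)$. If $R^*<\infty$, then $(u,v)$ restricted to $B_{R^*}$ is a positive radial solution, so Theorem~\ref{thm1}(ii) ensures $u(R^{*-})$ and $v(R^{*-})$ are finite. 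Setting $\phi:=r^{N-1}|u'|^{p-1}$ and $\mu:=\alpha/(p-1)<1$, the first equation yields $\phi'(r)=v(r)^m r^{(N-1)(1-\mu)}\phi(r)^\mu$; the boundedness of $v$ and $\mu<1$ then give, via separation of variables, an a priori bound on $\phi$, hence on $|u'|$, on $[0,R^*]$, and a similar bound on $|v'|$ follows from the integrated second equation. Standard continuation extends $(u,v)$ past $R^*$, contradicting maximality.

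\emph{Necessity.} Suppose $(u,v)$ is a non-constant global positive radial solution. The integrated equations give $u',v'\geq 0$, non-constancy forces $u'>0$ and $v'>0$ for $r>0$, and $v(r)\geq v(0)=:v_0>0$. To rule out $\alpha\geq p-1$, I keep $\phi$ and set $\mu=\alpha/(p-1)\geq 1$, obtaining the superlinear differential inequality $\phi'(r)\geq v_0^m r^{(N-1)(1-\mu)}\phi(r)^\mu$. When $\mu>1$ and $(N-1)(1-\mu)>-1$, separation of variables forces $\phi$ to blow up at finite $r$; in the remaining range, and in the critical case $\mu=1$, one couples the first equation with the integrated second equation to replace the constant $v_0^m$ by a diverging factor $v(r)^m$, producing a reinforced superlinear inequality that still yields finite-$r$ divergence of $\phi$. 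Either outcome contradicts global existence.

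It remains to rule out $\alpha<p-1$ with $\delta<0$. The scaling transformation $u_\lambda(r)=\lambda^a u(\lambda r)$, $v_\lambda(r)=\lambda^b v(\lambda r)$ preserves \eqref{sys0} precisely when
\[
a(p-1-\alpha)-bm=\alpha-p,\qquad -aq+b(p-1-\beta)=q-p,
\]
a system uniquely solvable because $\delta\neq 0$, and one verifies that $a,b>0$ occurs exactly when $\delta>0$. The logarithmic change of variables $t=\ln r$, $X=\ln u$, $Y=\ln v$, $Z=\ln u'$ converts the radial ODE system into the three-component autonomous dynamical system announced in the abstract, whose positive equilibrium corresponds to the self-similar profile $u\sim r^a$, $v\sim r^b$. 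Under $\delta<0$ this equilibrium lies in the ``wrong'' half-space, and the cooperative and irreducible structure of the system forces every forward orbit from positive radial Cauchy data either to blow up at finite $t$ or to leave the positivity region, contradicting globality of $(u,v)$. This last step is the main technical obstacle: the quantitative control of orbits under $\delta<0$ requires the full monotone-dynamical-systems machinery developed later in the paper, and the borderline $\alpha=p-1$ case in the previous step is also delicate, since the raw one-equation argument is only linear and the super-linear feedback essential for finite-$r$ divergence must be extracted from the coupling between the two equations.
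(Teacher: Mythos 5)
Your sufficiency argument is sound and matches the paper's: local existence by a fixed-point/integral-equation construction, boundedness from Theorem~\ref{thm1}(ii), and continuation (your explicit bound on $\phi=r^{N-1}|u'|^{p-1}$ using $\mu=\alpha/(p-1)<1$ usefully fills in the continuation step the paper leaves implicit). The necessity direction, however, has two genuine gaps. The most serious is the case $0\leq\alpha<p-1$ with $mq>(p-1-\alpha)(p-1-\beta)$: you assert that for $\delta<0$ the cooperative and irreducible structure ``forces every forward orbit either to blow up at finite $t$ or to leave the positivity region,'' and you yourself flag that this needs machinery ``developed later in the paper.'' But that machinery (the three-component system and its $\omega$-limit analysis) is carried out only under $\delta>0$ and serves to identify the asymptotics of global solutions, not to exclude their existence when $\delta<0$; your sketch never actually produces a contradiction. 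The paper's route is entirely different and elementary: by Theorem~\ref{thm1}(iii)--(iv), exactly when $mq>(p-1-\alpha)(p-1-\beta)$ there is a radial solution $(u,v)$ in $B_1$ with $v(1^-)=\infty$; rescaling the putative global solution $(U,V)$ so that $V(0)>v(0)$ and running a comparison through the integrated equations \eqref{p3.1} gives $V>v$ on $(0,1)$, hence $V(1^-)=\infty$, a contradiction.

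The second gap is in ruling out $\alpha>p-1$ when $(N-1)(\mu-1)>1$. There $r^{(N-1)(1-\mu)}$ is integrable at infinity, so $\phi'\geq v_0^m r^{(N-1)(1-\mu)}\phi^\mu$ admits global solutions (compare $\phi'=r^{-2}\phi^2$ with $\phi(r_0)<r_0$) and separation of variables yields nothing. Your proposed repair---replacing $v_0^m$ by a ``diverging factor'' $v(r)^m$---is unsubstantiated: the integrated second equation only gives $v'\geq Cr^{-(N-1)/(p-1)}$ for large $r$, which is integrable when $p<N$, so $v$ need not diverge. The paper sidesteps this entirely: with $\gamma=(N-1)(p-1-\alpha)/(p-1)<0$, integrating $[r^\gamma W^{p-1-\alpha}]'=\frac{\gamma}{N-1}r^\gamma V^m$ over $[r,\infty)$ shows $[W^{p-1-\alpha}]'>0$, hence (since $p-1-\alpha<0$) $W=U'$ is decreasing; with $W\geq 0$ and $W(0)=0$ this forces $W\equiv 0$, contradicting non-constancy. (For $\alpha=p-1$ your worry about needing ``super-linear feedback'' is misplaced: $\phi'=v^m\phi$ with $\phi(0)=0$ and $v$ locally bounded already gives $\phi\equiv 0$ by Gronwall, which is the contradiction; this is the paper's Case~3 in logarithmic form.)
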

We next discuss the behavior at infinity of global positive radial solutions of \eqref{sys0}. Using properties
of three-component irreducible dynamical systems we are able to extend the result in \cite[Theorem 2.7]{S2015}
where extra conditions on exponents are required. For the sake of completeness, we have stated in Appendix A
all the important results from the theory of cooperative and irreducible dynamical systems we used in the present work.

\begin{theorem}\label{thm3}
Assume that $0\leq \alpha<p-1$ and $\delta>0$. Then, any
non-constant positive radial solution $(u, v)$ of \eq{sys0}
satisfies
\begin{equation}\label{abconst}
\lim_{|x|\rightarrow \infty} \frac{u(x)}{|x|^{1+\frac{p(m+1)-(1+\beta)}{\delta}}}= A \mbox{ and }
\lim_{|x|\rightarrow \infty} \frac{v(x)}{|x|^{\frac{p(p-1-\alpha)+q}{\delta}}}= B,
\end{equation}
where $A= A(N, p, q, m, \alpha, \beta)>0$ and $B= B(N, p, q, m, \alpha, \beta)>0$ have explicit
expressions  given by \eq{th2i} and \eq{th2j}.
\end{theorem}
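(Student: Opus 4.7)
My approach is to recast the radial ODE system as an autonomous dynamical system in logarithmic time and then invoke the cooperative/irreducible convergence machinery of Appendix~A. By Theorem~\ref{thm2}, the hypotheses force $0\leq\alpha<p-1$ and $mq<(p-1-\alpha)(p-1-\beta)$, hence $\delta>0$; a preliminary analysis of the radial ODE yields $u',v'>0$ on $(0,\infty)$ and $u,v\to\infty$ as $r\to\infty$. Write $a=1+\frac{p(m+1)-(1+\beta)}{\delta}$ and $b=\frac{p(p-1-\alpha)+q}{\delta}$ for the target exponents. With $t=\log r$ I introduce the four positive quantities
\[
X(t)=\frac{ru'(r)}{u(r)},\qquad Y(t)=\frac{rv'(r)}{v(r)},\qquad Z_1(t)=rv(r)^{m}u'(r)^{\alpha-p+1},\qquad Z_2(t)=rv(r)^{\beta}u'(r)^{q}v'(r)^{1-p},
\]
each chosen so that under the conjectured asymptotics in \eqref{abconst} they converge to $a$, $b$, $Z_1^\star:=N+a(p-1)-p$ and $Z_2^\star:=N+b(p-1)-p$ respectively.

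Differentiating in $t$ and using \eqref{sys0} to eliminate the second derivatives, a direct computation produces
\begin{align*}
\dot X &= X\Bigl[1-X+\tfrac{Z_1-(N-1)}{p-1}\Bigr],\\
\dot Y &= Y\Bigl[1-Y+\tfrac{Z_2-(N-1)}{p-1}\Bigr],\\
\dot Z_1 &= Z_1\Bigl[1+mY+\tfrac{(\alpha-p+1)(Z_1-(N-1))}{p-1}\Bigr],\\
\dot Z_2 &= Z_2\Bigl[1+\beta Y+\tfrac{q(Z_1-(N-1))}{p-1}-(Z_2-(N-1))\Bigr].
\end{align*}
The crucial observation is that $X$ does not appear on any of the last three right-hand sides, so $(Y,Z_1,Z_2)$ obeys a closed three-dimensional autonomous system whose unique equilibrium in the positive octant is $P^\star=(b,Z_1^\star,Z_2^\star)$. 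The Jacobian at $P^\star$ has all nonnegative off-diagonal entries, namely $\tfrac{b}{p-1},\ mZ_1^\star,\ \beta Z_2^\star,\ \tfrac{qZ_2^\star}{p-1}$, and the associated digraph contains the cycle $Y\to Z_1\to Z_2\to Y$. Hence the three-dimensional system is cooperative and irreducible, and the same sign structure is inherited on the whole open positive octant where the trajectory lives.

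The main technical obstacle will be to show that the forward orbit $t\mapsto(Y(t),Z_1(t),Z_2(t))$ is bounded and bounded away from the coordinate planes. I plan to obtain this by constructing a positively invariant compact box around $P^\star$ (dictated by the signs of $\dot Y,\dot Z_1,\dot Z_2$ on the nullclines) or, failing that, by extracting a priori bounds on the ratios $u/r^a$ and $v/r^b$ directly from \eqref{sys0} by integration, exploiting $\delta>0$. Once precompactness of the positive semi-orbit is established, the convergence results for cooperative irreducible dynamical systems recalled in Appendix~A imply that the orbit accumulates on the set of equilibria, and $P^\star$ is the only candidate in the open positive octant, so $(Y(t),Z_1(t),Z_2(t))\to P^\star$ as $t\to\infty$.

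Finally, substituting $Z_1(t)\to Z_1^\star$ into the $\dot X$ equation produces an asymptotically autonomous scalar ODE whose asymptotically stable equilibrium is $X=a$; this gives $X(t)\to a$. Integrating the convergences $X\to a$ and $Y\to b$ in $t$ then yields $u(r)/r^a\to A$ and $v(r)/r^b\to B$ for some finite positive constants $A,B$, with the explicit values declared in \eqref{abconst} recovered by inserting the limit values $Z_1^\star$ and $Z_2^\star$ into the identities $(Aa)^{p-1-\alpha}=B^{m}/Z_1^\star$ and $(Bb)^{p-1-\beta}=(Aa)^{q}/Z_2^\star$, which are precisely \eqref{th2i}--\eqref{th2j}.
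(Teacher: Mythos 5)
Your setup is exactly the paper's: the same logarithmic substitution, the same four quantities (your $Z_1,Z_2$ are the paper's $Z,W$ from \eqref{xyzw}), the same closed cooperative irreducible three--dimensional subsystem \eqref{th2b}, and the same unique positive equilibrium. But the two steps you defer or assert are precisely where the work lies. The confinement of the orbit is not obtainable from a ``positively invariant compact box around $P^\star$'' read off the nullclines: the box $[P_*,P_\infty]$ is forward invariant only because its upper corner \emph{is} the equilibrium (comparison with the constant solution), so one must first show the trajectory enters it. The paper does this (Lemma \ref{l2c}) by analysing the backward limit $t\to-\infty$, i.e.\ $r\to 0$: a Generalized Mean Value Theorem computation gives $Z(t)\to N+\alpha/(p-1-\alpha)$, a contradiction argument handles $W$, and only then does forward comparison from a sequence $t_j\to-\infty$ yield $\zeta(t)<P_\infty$ for all $t$. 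The lower bounds come from Lemma \ref{firstestim}. None of this is replaced by your two proposed alternatives, which remain at the level of intention.

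More seriously, precompactness plus cooperativity plus irreducibility does \emph{not} imply that your particular orbit accumulates on the equilibrium set: Theorem \ref{lebesgue} is a generic (almost-everywhere) statement, and $\zeta(0)$ could lie in the exceptional null set $\Sigma$; cooperative irreducible $3$D systems do admit non-convergent orbits. The paper closes this hole by first proving $\omega(\widetilde P)=\{P_\infty\}$ for all $\widetilde P\in[P_*,P_\infty]\setminus\Sigma$ and then invoking the Limit Set Dichotomy (Theorem \ref{dich}) together with $\omega(P)\le P_\infty$ to cover every point of $[[P_*,P_\infty]]$. Two further omissions: you must exclude the boundary equilibria $P_1,P_2,P_3$ (which lie in the closed octant, where $\omega$ could a priori sit, since you have not shown $Y$ stays away from $0$ in forward time) --- the paper does this with an Intermediate Value Theorem argument and monotonicity of $\widetilde Y$, $\widetilde W$ near infinity; and the step ``$P_\infty\in\omega$ implies $\omega=\{P_\infty\}$'' uses the asymptotic stability of $P_\infty$, proved in Lemma \ref{l2b} via a Routh--Hurwitz-type inequality $ab>9c>c$, which your proposal never establishes. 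The final identities you state for $A$ and $B$ are consistent with \eqref{th2i}--\eqref{th2j}, but as written the argument has real gaps at the boundedness, the genericity, and the equilibrium-selection stages.
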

The quantities
$A|x|^{1+\frac{p(m+1)-(1+\beta)}{\delta}}$ and
$B|x|^{\frac{p(p-1-\alpha)+q}{\delta}}$ that appear in Theorem
\ref{thm3} may be regarded as stabilizing profiles for the
steady-states solutions in the time-depending system that
corresponds to \eqref{sys0}.

We point out that the requirement $\delta>0$ in \eqref{deltaa} is a classical condition on superlinearity
of the system as described in \cite{BVG2010}. Also, the value of the limits $A$ and $B$ in \eqref{abconst}
depend decreasingly on the space dimension $N\geq 2$. One can see that from their expressions in \eq{th2i} and \eq{th2j}.

Using MATLAB we have plotted  the non-constant positive global
solution $(u,v)$ to \eqref{sys0} (see Figure 1 below) over the
interval $[0,500]$ for $p=10$, $\alpha=\beta=1$, $m=2$, $q=4$ and
for various space dimensions $N=3, 10, 30, 60$. The solutions was
normalized at the origin by $u(0)=v(0)=1$.

\begin{figure}[!htb]
   \begin{minipage}{0.53\textwidth}
     \centering
     \includegraphics[width=0.95\linewidth]{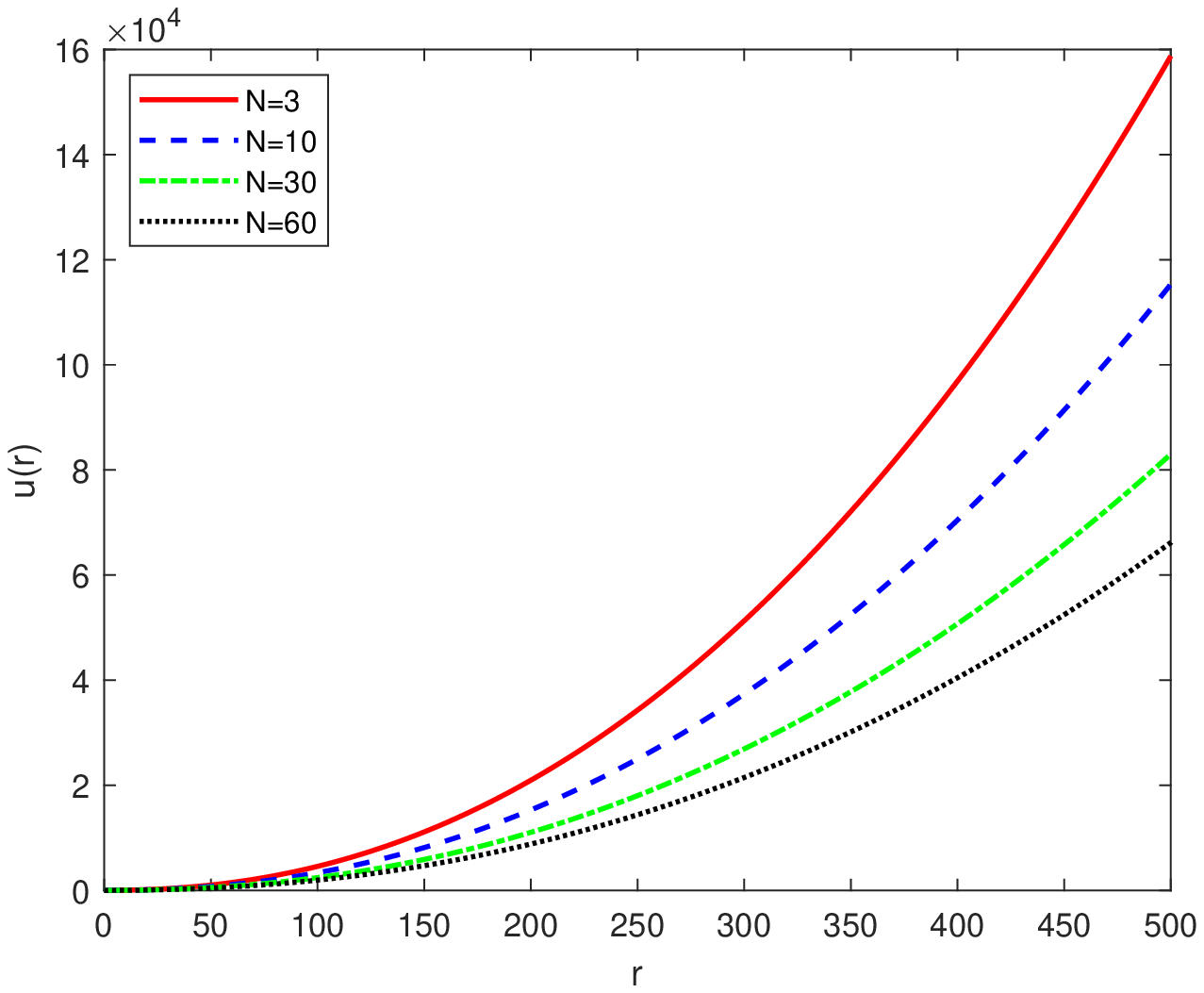}
   \end{minipage}\hfill
   \begin {minipage}{0.53\textwidth}
     \centering
     \includegraphics[width=0.95\linewidth]{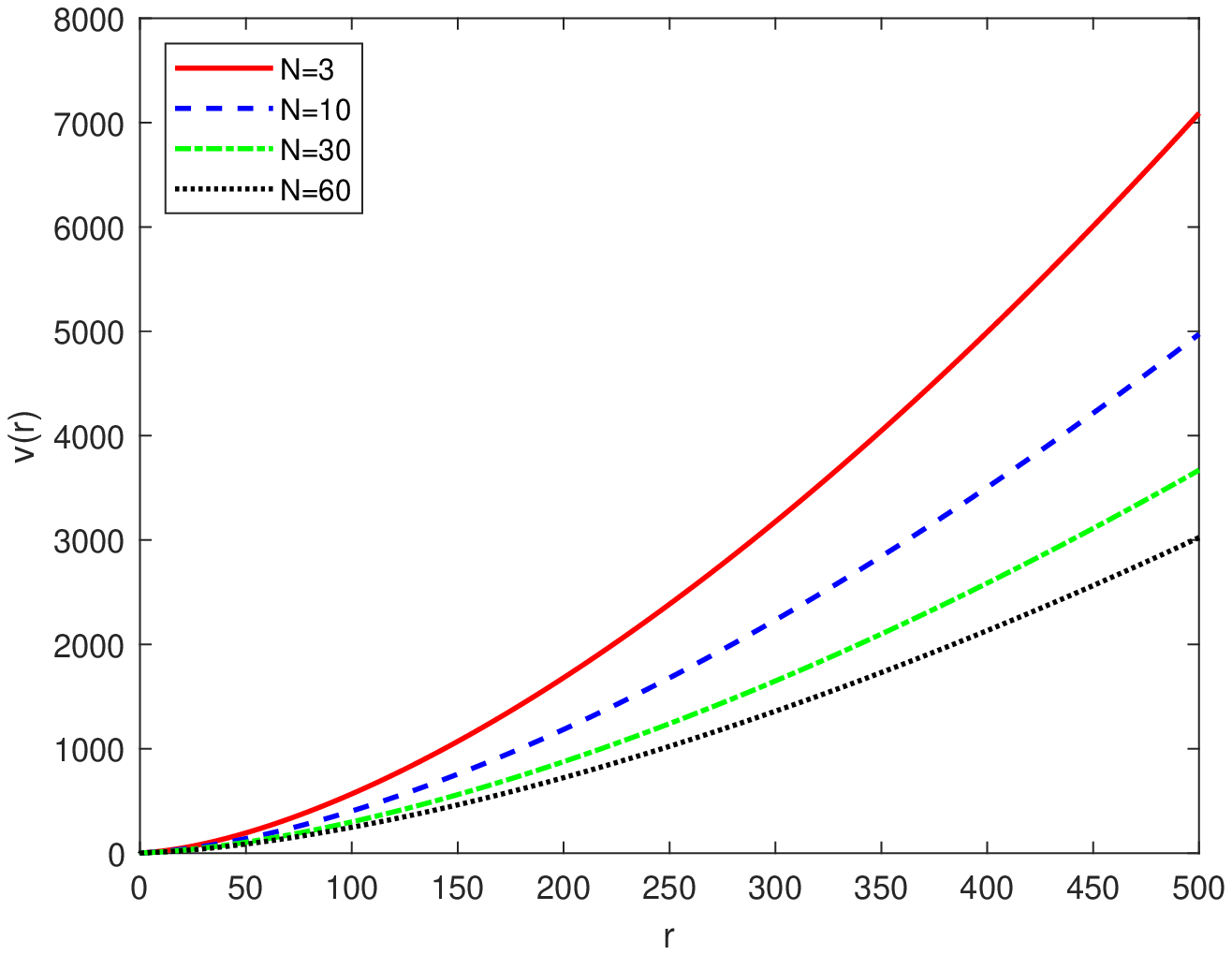}
   \end{minipage}
   \caption{The graphs of $u(r)$ (left) and $v(r)$ (right) for $p=10$, $\alpha=\beta=1$, $m=2$,
   $q=4$ and for various space dimensions $N=3, 10, 30, 60$.}
\end{figure}

In our next result we show that given any pair $(a,b)\in (0,\infty)\times (0,\infty)$, there exists a
unique positive global radial solutions of \eqref{sys0} that emanates from $(a,b)$.

\begin{theorem}\label{thm4}
Assume $1<p<N$, $0\leq \alpha<p-1$ and $\delta> 0$. Then for any $a> 0$, $b> 0$ there exists a unique
global positive radial solution of \eq{sys0} such that $u(0)= a$ and $v(0)= b$.
\end{theorem}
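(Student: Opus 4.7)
The plan is to treat \eqref{sys0} as the radial ODE system
\begin{equation*}
(r^{N-1}|u'|^{p-2}u')' = r^{N-1}v^{m}(u')^{\alpha}, \qquad (r^{N-1}|v'|^{p-2}v')' = r^{N-1}v^{\beta}(u')^{q},
\end{equation*}
with data $u(0)=a$, $v(0)=b$ and the compatibility conditions $u'(0)=v'(0)=0$ forced by $C^{2}$ regularity, and to combine a local fixed-point argument at $r=0$ with the a priori bounds for positive radial solutions already available from Theorems \ref{thm1} and \ref{thm2}. Since both right-hand sides are non-negative, the fluxes $r^{N-1}|u'|^{p-2}u'$ and $r^{N-1}|v'|^{p-2}v'$ are non-decreasing from $0$, so $u',v'\ge 0$ on the existence interval, and integrating twice yields the equivalent integral formulation
\begin{equation*}
u(r)=a+\int_{0}^{r}\Bigl(s^{1-N}\int_{0}^{s}\tau^{N-1}v(\tau)^{m}(u'(\tau))^{\alpha}\,d\tau\Bigr)^{1/(p-1)}ds,
\end{equation*}
together with the analogous identity for $v$.

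For the local step I would apply the Banach contraction principle to this integral system in a closed ball around the constant pair $(a,b)$ in $C([0,r_{0}])^{2}$, for $r_{0}$ small in terms of $(a,b)$. The hypothesis $0\le\alpha<p-1$ is decisive here: the ratio $\alpha/(p-1)$ is strictly less than one, which is exactly what allows the $(u')^{\alpha}$ factor to be absorbed when iterating the integral operator, producing a contraction constant of the form $Cr_{0}^{\kappa}$ with $\kappa>0$. This delivers a unique local solution on some interval $[0,r_{0}]$.

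To extend this solution to $[0,\infty)$, I would let $[0,R_{\max})$ be its maximal interval of existence as a non-constant positive radial pair. Under $\delta>0$, part (ii) of Theorem \ref{thm1} guarantees that $u$ and $v$ remain bounded on any finite $R_{\max}$; the integrated form then bounds $u'$ and $v'$ on the same interval, so standard continuation forces $R_{\max}=\infty$. Propagation of uniqueness from $[0,r_{0}]$ to $[0,\infty)$ is then a routine Picard--Lindel\"of argument, because for $r>0$ and $u',v'>0$ the first-order system in $(u,v,u',v')$ has a locally Lipschitz right-hand side. The main obstacle throughout is the degeneracy at $r=0$: the $(p-2)$-homogeneity of the $p$-Laplacian combined with the vanishing derivatives prevents any direct use of classical well-posedness, and the heart of the argument is the precise contraction set-up, driven entirely by the inequality $\alpha<p-1$. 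The secondary hypothesis $1<p<N$ will most likely enter only in the global step, when translating the radial boundedness of $u$ and $v$ into boundedness of $u'$ and $v'$ via the $s^{1-N}$ weight.
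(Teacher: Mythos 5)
Your existence-plus-globality outline matches the paper: local existence via a fixed point for the integral operator \eqref{t1}--\eqref{t3}, monotonicity of the fluxes, and Theorem \ref{thm1}(ii) to rule out finite blow-up when $\delta>0$. The genuine gap is in uniqueness, which is in fact the entire content of the paper's proof of Theorem \ref{thm4}. The Banach contraction you invoke at $r=0$ is not available in general: the maps $s\mapsto s^{\alpha}$ (for $0<\alpha<1$), $s\mapsto s^{q}$ (for $0<q<1$) and $x\mapsto x^{1/(p-1)}$ (for $p>2$) are not Lipschitz at the origin, and the compatibility condition $u'(0)=v'(0)=0$ forces every iterate of the operator to be evaluated precisely at this degenerate point, so the contraction constant is not of the form $Cr_0^{\kappa}$ with finite $C$. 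The inequality $\alpha<p-1$ makes the exponent $1/(p-1-\alpha)$ positive so that $u'$ can be solved for, but it does nothing to restore Lipschitz continuity; this is why the paper (following \cite{FV2017}) obtains the local solution by a compactness-type fixed-point argument that yields existence only. Hence local uniqueness at $r=0$ does not come for free, and your Picard--Lindel\"of continuation (which is indeed routine once $r>0$ and $u',v'>0$) has nothing to propagate.

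The paper's actual uniqueness proof is a global comparison argument: the change of variables $r=t^{\theta}$ with $\theta=-(p-1)/(N-p)<0$ --- this is where the hypothesis $1<p<N$ enters, not in the continuation step as you guessed --- converts the initial conditions at $r=0$ into terminal conditions at $t=\infty$. Given two solutions with the same data, one dilates one of them by $\widehat{u}=(1+\epsilon)u$, $\widehat{v}=(1+\epsilon)^{(p-1-\alpha)/m}v$, which over-solves the second equation by the factor $(1+\epsilon)^{\delta/m}>1$; a sweeping argument on the transformed system then gives $\widehat{u}>\widetilde{u}$ and $\widehat{v}>\widetilde{v}$ everywhere, and letting $\epsilon\to0$ yields $u\ge\widetilde{u}$, $v\ge\widetilde{v}$ and, by symmetry, equality. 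Some argument of this kind (or a genuinely worked-out contraction in a weighted space adapted to the degeneracy, which you have not supplied) is needed to close your proof.
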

Finally, let us consider the single equation that underlays the system \eqref{sys0}, namely
\begin{equation}\label{eqq}
\Delta_p u=u^m|\nabla u|^q\quad\mbox{ in }\R^N, N\geq 2.
\end{equation}
The case $q=0$ was discussed in \cite{MP2001}. Here we are interested in the case $m,q>0$. From
Theorems \ref{thm1}-\ref{thm4} we find:
\begin{cor}\label{coro}
Assume $m,q>0$, $p>1$ and $m+q\neq p-1$.
Then \eqref{eqq} has a non-constant positive radial solution if and only if
\begin{equation}\label{conn}
0<q<p-1\quad\mbox{ and }\quad m<p-q-1.
\end{equation}
If \eqref{conn} holds, then any non-constant positive radial solution $u$ of \eqref{eqq} satisfies
$$
\lim_{|x|\to \infty}\frac{u(x)}{|x|^{\frac{p-q}{p-1-m-q}}}=C(N,m,p,q)>0.
$$
Further, if $1<p<N$ then from any $a>0$ there exists a unique non-constant positive radial solution
$u$ of \eqref{conn} such that $u(0)=a$.
\end{cor}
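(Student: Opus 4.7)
The plan is to deduce Corollary \ref{coro} from Theorems \ref{thm1}--\ref{thm4} by specializing the system \eqref{sys0} to the parameters $\alpha=q$, $\beta=m$. With this choice both equations of \eqref{sys0} share the right-hand side $v^{m}|\nabla u|^{q}$, so every solution satisfies $\Delta_p u=\Delta_p v$ pointwise. For a radial solution this identity reads $(r^{N-1}|u'|^{p-2}u')'=(r^{N-1}|v'|^{p-2}v')'$; integrating from $0$ and using $u'(0)=v'(0)=0$ (forced by $C^{2}$ radial regularity) gives $|u'|^{p-2}u'=|v'|^{p-2}v'$, hence $u'\equiv v'$. Together with $u(0)=v(0)$ this yields $u\equiv v$, so $u$ solves \eqref{eqq}. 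Conversely, if $u$ is a radial solution of \eqref{eqq}, then $(u,u)$ is a radial solution of \eqref{sys0} with $\alpha=q$, $\beta=m$ (both constraints $\alpha\geq 0$ and $0\leq\beta\leq m$ being trivially verified).

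Next I would translate the hypotheses. With $\alpha=q$, $\beta=m$, a short computation gives
\[
\delta=(p-1-q)(p-1-m)-qm=(p-1)(p-1-m-q),
\]
so the standing condition $\delta\neq 0$ is exactly $m+q\neq p-1$. Condition \eqref{nonct} of Theorem \ref{thm2} reads $0\leq q<p-1$ and $mq<(p-1)(p-1-m-q)$, i.e.\ $m+q<p-1$; together with the hypothesis $q>0$ this matches \eqref{conn} term by term. Theorem \ref{thm2}, applied in both directions via the diagonal correspondence described above, then delivers the existence/nonexistence dichotomy of the corollary.

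For the asymptotic profile, inserting $\alpha=q$, $\beta=m$ into the two exponents of Theorem \ref{thm3} I compute
\[
1+\frac{p(m+1)-(1+m)}{\delta}=1+\frac{(p-1)(m+1)}{(p-1)(p-1-m-q)}=\frac{p-q}{p-1-m-q},
\]
and similarly
\[
\frac{p(p-1-q)+q}{\delta}=\frac{(p-1)(p-q)}{(p-1)(p-1-m-q)}=\frac{p-q}{p-1-m-q},
\]
so the two scaling powers in \eqref{abconst} collapse to a common value; since $u\equiv v$ the constants $A$ and $B$ must agree, producing the single limit $C(N,m,p,q)>0$. The uniqueness claim follows by invoking Theorem \ref{thm4} with $b=a$: when $1<p<N$ there is a unique radial solution of \eqref{sys0} with $u(0)=v(0)=a$, and by the diagonal argument this is exactly one radial solution of \eqref{eqq} with $u(0)=a$.

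The only genuinely substantive step is the integration argument that forces $u\equiv v$ on diagonal initial data; everything else is bookkeeping of exponents and structural hypotheses. The main thing to check carefully is that the specialization $(\alpha,\beta)=(q,m)$ saturates but does not violate any condition appearing in Theorems \ref{thm1}--\ref{thm4} (in particular the borderline constraint $0\leq\beta\leq m$), which it does, so no new analytical work beyond those theorems is required.
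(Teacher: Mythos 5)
Your reduction via $(\alpha,\beta)=(q,m)$, under which both equations of \eqref{sys0} have the identical right-hand side $v^m|\nabla u|^q$ and the diagonal identity $u\equiv v$ follows by integrating $\Delta_p u=\Delta_p v$, is exactly the derivation the paper leaves implicit, and all your exponent computations ($\delta=(p-1)(p-1-m-q)$, both scaling powers collapsing to $\frac{p-q}{p-1-m-q}$) check out. The one point worth making explicit is that for the existence direction with general $p>1$ the bare statement of Theorem \ref{thm2} does not guarantee a solution with $u(0)=v(0)$ (and the scaling $(u_\lambda,v_\lambda)$ cannot adjust the ratio, since here both scaling exponents coincide), so you must take $a=b$ in the fixed-point construction \eqref{t1}--\eqref{t3}; for $1<p<N$ Theorem \ref{thm4} supplies the diagonal datum directly.
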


The remaining of the paper contains the proofs of the above four theorems.

\section{Proof of Theorem \ref{thm1}}
Let $(u, v)$ be a non-constant positive radial solution of \eq{sys0} in a ball $B_R$. Then $(u, v)$ satisfies
\begin{equation}\label{eqthm2}
\left\{
\begin{aligned}
&\Big[ r^{N-1}u'|u'|^{p-2}\Big]'=r^{N-1}v^{m}|u'|^{\alpha} &&\quad\mbox{ for all } 0<r<R,\\
&\Big[ r^{N-1}v'|v'|^{p-2}\Big]'=r^{N-1}v^{\beta}|u'|^{q} &&\quad\mbox{ for all } 0<r<R,\\
&u'(0)=v'(0)=0, u(r)>0, v(r)>0 &&\quad\mbox{ for all } 0<r<R.
\end{aligned}
\right.
\end{equation}
Thus, $r\longmapsto r^{N-1}u'|u'|^{p-2}$ and $r\longmapsto r^{N-1}v'|v'|^{p-2}$ are nondecreasing and vanish at $r=0$.
Since $(u,v)$ is non-constant, it follows that $u'(r)> 0$ and $v'(r)> 0$ for all $0<r<R$, so $u$ and $v$ are increasing.
Thus, \eqref{eqthm2} reads

\begin{equation}\label{p1}
\left\{
\begin{aligned}
&\big[(u')^{p-1}\big]'+\frac{N-1}{r}(u')^{p-1}=v^{m}(u')^{\alpha} &&\quad\mbox{ for all } 0<r<R,\\
&\big[(v')^{p-1}\big]'+\frac{N-1}{r}(v')^{p-1}=v^{\beta}(u')^{q} &&\quad\mbox{ for all } 0<r<R,\\
&u'(0)=v'(0)=0, u(r)>0, v(r)>0 &&\quad\mbox{ for all } 0<r<R,
\end{aligned}
\right.
\end{equation}
which further implies
\begin{equation}\label{p1.1}
\left\{
\begin{aligned}
&\big[(u')^{p-1-\alpha}\big]'+\frac{\gamma}{r}(u')^{p-1-\alpha}=\frac{p-1-\alpha}{p-1} v^{m} &&\quad\mbox{ for all } 0<r<R,\\
&\big[(v')^{p-1}\big]'+\frac{N-1}{r}(v')^{p-1}=v^{\beta}(u')^{q} &&\quad\mbox{ for all } 0<r<R,\\
&u'(0)=v'(0)=0, u(r)>0, v(r)>0 &&\quad\mbox{ for all } 0<r<R,
\end{aligned}
\right.
\end{equation}
where
\begin{equation}\label{gama}
\gamma=\frac{(N-1)(p-1-\alpha)}{p-1}>0.
\end{equation}
We can rearrange \eqref{p1.1} in the form
\begin{equation}\label{p2}
\left\{
\begin{aligned}
&\Big[r^{\gamma}(u')^{p-1-\alpha}\Big]'=\frac{p-1-\alpha}{p-1}r^{\gamma}v^{m}(r) &&\quad\mbox{ for all } 0<r<R,\\
&\Big[r^{N-1}(v')^{N-1}\Big]'=r^{N-1}v^{\beta}(r)|u'(r)|^{q}, &&\quad\mbox{ for all } 0<r<R.
\end{aligned}
\right.
\end{equation}

\medskip

\begin{lemma}\label{firstestim}
Any non-constant positive radial solution $(u, v)$ of \eqref{sys0}
in $B_R$ satisfies
\begin{equation}\label{l01}
\Big(N+\frac{\alpha}{p-1-\alpha}\Big) (u'(r))^{p-1-\alpha} <r v^m(r) \quad\mbox{ for all }0<r<R,
\end{equation}

\begin{equation}\label{l02}
 N(v'(r))^{p-1} <r v^\beta(r) (u'(r))^q \quad\mbox{ for all }0<r<R,
\end{equation}

\begin{equation}\label{l1}
\frac{p-1-\alpha}{N(p-1-\alpha)+\alpha} v^{m}(r) < \big[(u')^{p-1-\alpha}\big]'(r)< \frac{p-1-\alpha}{p-1}v^{m}(r)\quad\mbox{ for all }0<r<R,
\end{equation}
and
\begin{equation}\label{l2}
\frac{v^{\beta}(r)(u')^{q}(r)}{N}\leq [(v')^{p-1}]'(r)\leq v^{\beta}(r)(u')^{q}(r) \quad\mbox{ for all }0<r<R.
\end{equation}

\end{lemma}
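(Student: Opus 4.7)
The plan is to exploit the divergence-form identities in \eqref{p2} together with the monotonicity of $u$ and $v$ established just before the lemma statement.

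First I would integrate each equation of \eqref{p2} on $(0,r)$. Using the initial conditions $u'(0)=v'(0)=0$ this gives
\begin{equation*}
r^{\gamma}(u'(r))^{p-1-\alpha}=\frac{p-1-\alpha}{p-1}\int_0^r s^{\gamma}v^m(s)\,ds
\quad\text{and}\quad
r^{N-1}(v'(r))^{p-1}=\int_0^r s^{N-1}v^{\beta}(s)(u'(s))^{q}\,ds.
\end{equation*}
Since $u$ and $v$ are strictly increasing on $(0,R)$, the integrands are strictly dominated on $(0,r)$ by their values at $s=r$. Replacing $v^m(s)$, respectively $v^{\beta}(s)(u'(s))^q$, by those values and using $\int_0^r s^{\gamma}\,ds=r^{\gamma+1}/(\gamma+1)$, $\int_0^r s^{N-1}\,ds=r^N/N$, together with the identity $(p-1)(\gamma+1)=N(p-1-\alpha)+\alpha$ read directly from \eqref{gama}, yields \eqref{l01} and \eqref{l02}.

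For \eqref{l1} I would work with the non-divergence form in the first line of \eqref{p1.1}. The upper bound is immediate upon dropping the non-negative term $\frac{\gamma}{r}(u')^{p-1-\alpha}$. For the lower bound, I would insert \eqref{l01} into this term to obtain $\frac{\gamma}{r}(u')^{p-1-\alpha}<\frac{\gamma(p-1-\alpha)}{N(p-1-\alpha)+\alpha}v^m(r)$, substitute back into \eqref{p1.1}, and simplify using the value of $\gamma$ from \eqref{gama}; the remaining coefficient collapses to $\frac{p-1-\alpha}{N(p-1-\alpha)+\alpha}$, which is exactly what \eqref{l1} requires. The estimates in \eqref{l2} follow in the same way from the second equation of \eqref{p1} combined with \eqref{l02}: the upper bound by dropping the non-negative term $\frac{N-1}{r}(v')^{p-1}$, and the lower bound by inserting \eqref{l02} into that term.

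The whole argument is essentially integration plus monotonicity, so there is no serious obstacle. The only piece that requires actual care is the algebraic simplification in the lower bound of \eqref{l1}, where the specific form of $\gamma$ in \eqref{gama} is needed to reduce the coefficient to the claimed value.
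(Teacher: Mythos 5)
Your overall route is the paper's route (integrate the divergence-form identities, use monotonicity to pull the integrand out at $s=r$, then feed the resulting bound back into the non-divergence form), and the algebra you flag as the delicate point is fine: $(p-1)(\gamma+1)=N(p-1-\alpha)+\alpha$ does make the coefficients collapse as claimed. But there is one genuine gap in the step producing \eqref{l02}. To dominate the integrand $s^{N-1}v^{\beta}(s)(u'(s))^{q}$ by its value at $s=r$ you need $u'$ (not $u$) to be nondecreasing on $(0,r)$, since $q>0$; the monotonicity available before the lemma is only $u'>0$, $v'>0$, i.e.\ that $u$ and $v$ are increasing, and neither that nor the fact that $r\mapsto r^{\gamma}(u')^{p-1-\alpha}$ is increasing gives you directly that $u'$ itself is increasing. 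So the sentence ``since $u$ and $v$ are strictly increasing\ldots the integrands are strictly dominated'' is correct for the first integral but unjustified for the second.

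The fix is exactly the ordering the paper uses: first prove \eqref{l01} and then \eqref{l1} (which only need $v$ increasing); the lower bound in \eqref{l1} shows $\big[(u')^{p-1-\alpha}\big]'>0$, hence $u'$ is strictly increasing; only then integrate the second equation to get \eqref{l02} and \eqref{l2}. Since you already derive \eqref{l1} by the same computation, your argument is complete once you perform the two integrations sequentially rather than simultaneously and insert this one observation. Everything else, including the derivation of the lower bounds in \eqref{l1} and \eqref{l2} by substituting \eqref{l01} and \eqref{l02} back into the non-divergence forms, matches the paper.
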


\begin{proof} For simplicity, let us write $w=u'$ so \eq{p1.1} and \eqref{p2} read

\begin{equation}\label{p3}
\left\{
\begin{aligned}
&(w^{p-1-\alpha})'+\frac{\gamma}{r}w^{p-1-\alpha}=\frac{p-1-\alpha}{p-1}v^m  &&\quad\mbox{for all }0<r<R, \\
&[(v')^{p-1}]'+\frac{N-1}{r}(v')^{p-1}=w^{q}v^{\beta}&&\quad\mbox{for all }0<r<R,\\
&w(0)=v'(0)=0, w(r)>0, v'(r)>0, v(r)>0 &&\quad\mbox{for all }0<r<R.
\end{aligned}
\right.
\end{equation}
and
\begin{equation}\label{p3.1}
\left\{
\begin{aligned}
&\Big[r^{\gamma}w^{p-1-\alpha}\Big]'=\frac{p-1-\alpha}{p-1}r^{\gamma}v^{m}(r)>0 &&\quad\mbox{ for all } 0<r<R,\\
&\Big[r^{N-1}(v')^{p-1}\Big]'=r^{N-1}v^{\beta}(r) w^{q}(r), &&\quad\mbox{ for all } 0<r<R, \\
&w(0)=v'(0)=0, w(r)>0, v'(r)>0, v(r)>0 &&\quad\mbox{for all }0<r<R.
\end{aligned}
\right.
\end{equation}
Integrating the first equation of \eqref{p3.1} and using the fact that $v$ is strictly increasing on $(0, R)$ we deduce
\begin{equation}\label{estimates2}
\begin{aligned}
r^{\gamma}w^{p-1-\alpha}(r)&=\frac{p-1-\alpha}{p-1}\int_0^r t^\gamma
v^{m}(t)dt
\\
&<\frac{p-1-\alpha}{p-1} v^{m}(r) \int_0^r  t^\gamma  dt\\
&=\frac{p-1-\alpha}{(p-1)(\gamma+1)}r^{\gamma+1} v^{m}(r)
\quad\mbox{ for all }0<r<R.
\end{aligned}
\end{equation}
Hence,
\begin{equation*}
w^{p-1-\alpha}(r) <\frac{p-1-\alpha}{(p-1)(\gamma+1)}r v^{m}(r)
\quad\mbox{ for all }0<r<R,
\end{equation*}
which proves \eqref{l01}.
Using this estimate in the first equation of \eq{p3} it follows that
\begin{equation*}
\begin{aligned}
\frac{p-1-\alpha}{p-1} v^m(r)&=(w^{p-1-\alpha})'(r)+\frac{\gamma}{r} w^{p-1-\alpha}(r)\\
&<(w^{p-1-\alpha})'(r)+ \frac{\gamma(p-1-\alpha)}{(\gamma+1)(p-1)}v^{m}(r) \quad\mbox{ for all } 0<r<R,
\end{aligned}
\end{equation*}
which implies
\begin{equation}\label{half1}
(w^{p-1-\alpha})'(r)>\frac{p-1-\alpha}{(\gamma+1)(p-1)}v^{m}(r)=\frac{p-1-\alpha}{N(p-1-\alpha)+\alpha}v^{m}(r) \quad\mbox{ for all } 0<r<R.
\end{equation}
Also, from \eqref{p3.1} and the positivity of $w$ we deduce
\begin{equation}\label{half2}
(w^{p-1-\alpha})'(r)<\frac{p-1-\alpha}{p-1}v^{m}(r) \quad\mbox{ for all } 0<r<R.
\end{equation}
Now, \eqref{l1} follows from \eqref{half1} and \eqref{half2}. At this point, let us note that from \eqref{l1} we have
that $(u')^{p-1-\alpha}$ is positive and strictly increasing so $w=u'$ is also positive and strictly increasing.
Using this fact and the same approach as above we derive \eqref{l02} and \eqref{l2}.
\end{proof}

\noindent{\bf Proof of Theorem \ref{thm1}.} The existence of a non-constant positive solution to \eqref{sys0} in a
small ball $B_\rho$ follows from similar arguments to \cite[Proposition A1]{FV2017}
(see also \cite[Proposition 9]{BFP2015}). Specifically, we employ a fixed point argument for the mapping
\begin{equation}\label{t1}
{\mathcal T}:C^1[0,\rho]\times C^1[0,\rho]\to C^1[0,\rho]\times C^1[0,\rho],
\end{equation}
given by
\begin{equation}\label{t2}
{\mathcal T}[u,v](r)=\left[\begin{array}{c}{\mathcal
T_1}[u,v](r)\\{\mathcal T_2}[u,v](r)\end{array}\right],
\end{equation}
where
\begin{equation}\label{t3}
\left\{
\begin{aligned}
&&{\mathcal T_1}[u,v](r)&=a+\int_0^r\left(\frac{p-1-\alpha}{p-1}t^{-\gamma}\int_0^t s^\gamma v^m(s)ds\right)^{1/(p-1-\alpha)}dt, \\
&&{\mathcal T_2}[u,v](r)&=b+\int_0^r\left(t^{1-N}\int_0^t s^{N-1}
v^\beta(s) |u'(s)|^qds\right)^{1/(p-1)}dt,
\end{aligned}
\right.
\end{equation}
where $a,b>0$. With a standard approach, there exists a small radius
$\rho>0$ such that ${\mathcal T}$ has a fixed point $(u,v)$ which is
a non-constant positive radially symmetric solution of
\eqref{eqthm2}. Now, the pair $(u_\lambda,v_\lambda)$ defined as
$$
u_\lambda(x)=\lambda^{1+\frac{p(m+1)-(1+\beta)}{\delta}}u\Big(\frac{x}{\lambda}\Big)\,,\quad v_\lambda(x)=\lambda^{\frac{p(p-1-\alpha)+q}{\delta}}v\Big(\frac{x}{\lambda}\Big),
$$
provides a non-constant positive radially symmetric solution of \eqref{eqthm2} in the
ball $B_{\lambda\rho}$. This shows that in any ball of positive radius there are non-constant positive
radially symmetric solution of \eqref{eqthm2}.

Let us assume now that $(u,v)$ is a non-constant positive radially symmetric solution of \eqref{eqthm2}
in $\Omega=B_R$, $R>0$ and set $z=(u')^{p-1-\alpha}$. Then  by \eq{l1} and \eq{l2} in Lemma \ref{firstestim} we have
\begin{equation}\label{p4}
Cv^{m}(r)\leq z'(r)\leq v^{m}(r) \quad\mbox{ for all }0<r<R,
\end{equation}
and
\begin{equation}\label{p5}
C v^{\beta}(r)z^{\frac{q}{p-1-\alpha}}(r)\leq [(v')^{p-1}]'(r)\leq v^{\beta}(r)z^{\frac{q}{p-1-\alpha}}(r) \quad\mbox{ for all }0<r<R,
\end{equation}
for some constant $C=C(N,p,\alpha)\in (0,1)$.
\begin{enumerate}
\item [(i)] Assume that $u(R^{-})= \infty$ and $v(R^{-})< \infty$. Since, $u'$ is increasing
(observe from \eq{p4} that $z$ is increasing, which implies $u'$ is also increasing) we deduce that
$u'(R^{-})= z(R^{-})= \infty$. Also, from \eq{p4} we find

\begin{equation*}
C_1< z'(r)(r)\leq C_2 \mbox{ for all } r\in (0, R),
\end{equation*}
for some positive constants $C_1,C_2$. Integrating over $[0, R]$  we
reach a contradiction.

\item [(ii)-(iv)]

\medskip

Let $(u, v)$ be a positive radial solution of \eq{sys0} with $v(R^{-})= \infty$. It follows from \eq{p4} that
$z'(R^{-})= \infty$. Also, $v'$ is increasing and $v(R^{-})= \infty$ imply $v'(R^{-})= \infty$.
Using \eq{p4} and \eq{p5} we have

\begin{equation}\label{p6}
z'(r)\leq v^{m}(r) \quad\mbox{ for all }0<r<R,
\end{equation}
and
\begin{equation}\label{p7}
C v^{\beta}(r)z^{\frac{q}{p-1-\alpha}}(r)\leq [(v')^{p-1}]'(r) \quad\mbox{ for all }0<r<R.
\end{equation}

Multiplying \eq{p6} and \eq{p7} we obtain
\begin{equation*}
z^{\frac{q}{p-1-\alpha}}(r)z'(r)\leq Cv^{m-\beta}(r)[(v')^{p-1}]'(r) \quad\mbox{ for all }0<r<R.
\end{equation*}
Integrating over $[0, R]$ in the above estimate we have
\begin{equation*}
z^{\frac{q+p-1-\alpha}{p-1-\alpha}}(r)\leq C \int_0^r v^{m-\beta}(t)[(v')^{p-1}]'(t)dt \leq Cv^{m-\beta}(r)\int_0^r [(v')^{p-1}]'(t)dt.
\end{equation*}
So,
\begin{equation}\label{p8}
z^{\frac{q+p-1-\alpha}{(p-1)(p-1-\alpha)}}(r)\leq Cv^{\frac{m-\beta}{p-1}}(r)v'(r) \quad\mbox{ for all }0<r<R.
\end{equation}
Multiplying \eq{p8} by $z'(r)$ and using \eq{p6} we have
\begin{equation*}
z^{\frac{q+p-1-\alpha}{(p-1)(p-1-\alpha)}}(r)z'(r)\leq Cv^{\frac{m-\beta}{p-1}+m}(r)v'(r) \quad\mbox{ for all }0<r<R,
\end{equation*}
that is
\begin{equation*}
z^{\frac{q+p-1-\alpha}{(p-1)(p-1-\alpha)}}(r)z'(r)\leq Cv^{\frac{mp-\beta}{p-1}}(r)v'(r) \quad\mbox{ for all }0<r<R.
\end{equation*}
A further integration over $[0, r]$, $0<r<R$, yields
\begin{equation*}
z^{\frac{q+p(p-1-\alpha)}{(p-1)(p-1-\alpha)}}(r)\leq Cv^{\frac{mp+p-1-\beta}{p-1}}(r) \quad\mbox{ for all }0<r<R.
\end{equation*}
Hence, by the first estimate in \eqref{p4} we find
\begin{equation*}
\begin{aligned}
z^{\frac{q+p(p-1-\alpha)}{p-1-\alpha}}(r)&\leq C v^{mp+p-1-\beta}(r)=
\big( v^m(r)\big) ^{\frac{mp+p-1-\beta}{m}}\\
&\leq C \big(z'(r)\big)^{\frac{mp+p-1-\beta}{m}}  \quad\mbox{ for all } 0<r<R,
\end{aligned}
\end{equation*}
which yields
\begin{equation}\label{p9}
z'(r)z^{-\sigma}(r)\geq C \quad\mbox{ for all } 0<r<R,
\end{equation}
where
\begin{equation}\label{p9a}
\sigma= \frac{m}{p-1-\alpha}\cdot \frac{q+p(p-1-\alpha)}{mp+p-1-\beta}
>0.
\end{equation}

Now, we return to \eq{p5} to get
\begin{equation*}
[(v')^{p-1}]'(r)\leq v^{\beta}(r)z^{\frac{q}{p-1-\alpha}}(r) \quad\mbox{ for all }0<r<R.
\end{equation*}
Multiplying by $v'(r)$ and integrating over $[0, r]$, we have
\begin{equation*}
\frac{(p-1)}{p}(v')^{p}(r)\leq \int_0^r v^{\beta}(t)v'(t)z^{\frac{q}{p-1-\alpha}}(t)dt \leq z^{\frac{q}{p-1-\alpha}}(r)\int_0^r v^{\beta}(t)v'(t)dt.
\end{equation*}
Hence
\begin{equation}\label{p10}
v'(r)v^{-\frac{\beta+1}{p}}(r)\leq Cz^{\frac{q}{p(p-1-\alpha)}}(r) \quad\mbox{ for all }0<r<R.
\end{equation}
Multiplying \eq{p10} by $z'(r)$ and using $z'(r)\geq Cv^{m}(r)$ we
find
\begin{equation*}
v'(r)v^{m-\frac{\beta+1}{p}}(r)\leq Cz^{\frac{q}{p(p-1-\alpha)}}(r)z'(r)\quad\mbox{ for all }0<r<R.
\end{equation*}
Further integration over $[0, r]$ yields
\begin{equation}\label{p11}
v^{\frac{mp+p-\beta-1}{p}}(r)-v^{\frac{mp+p-\beta-1}{p}}(0)\leq Cz^{\frac{q+p(p-1-\alpha)}{p(p-1-\alpha)}}(r) \quad\mbox{ for all }0<r<R.
\end{equation}
Since $v(R^{-})=\infty$, there exists $\rho \in (0,R)$ such that
\begin{equation}\label{p12}
v(r)\leq Cz^{\frac{q+p(p-1-\alpha)}{(p-1-\alpha)(mp+p-\beta-1)}}(r) \quad\mbox{ for all }\rho \leq r<R.
\end{equation}
This yields $z(R^{-})=\infty$. Then, using \eq{p4} and \eq{p12} we obtain
\begin{equation}\label{p13}
z'(r)\leq v^{m}(r) \leq Cz^{\sigma}(r) \quad\mbox{ for all }\rho \leq r<R,
\end{equation}
where the exponent $\sigma$ is defined in \eq{p9a}. It follows from \eq{p9} and \eq{p13} that
\begin{equation}\label{p14}
C_1\leq z'(r)z^{-\sigma}(r)\leq C_2 \quad\mbox{ for all }\rho \leq r<R,
\end{equation}
for some constants $C_2>C_1>0$ depending only on parameters $p,q,\alpha,\beta$ and dimension $N$.
Since $z(R^{-})= \infty$, we deduce from \eq{p14} that
\begin{equation}\label{p15}
\sigma> 1
\end{equation}
and
\begin{equation}\label{p16}
C_1(R-r)\leq \frac{z^{1-\sigma}(r)}{\sigma-1}\leq C_2(R-r) \quad\mbox{ for all }\rho \leq r<R.
\end{equation}
Using $z= (u')^{p-1-\alpha}$, we have
\begin{equation*}
C_1(R-r)^{-\frac{1}{(\sigma-1)(p-1-\alpha)}}\leq u'(r)\leq C_2(R-r)^{-\frac{1}{(\sigma-1)(p-1-\alpha)}} \quad\mbox{ for all }\rho \leq r<R.
\end{equation*}
Since
$$
u(R^{-})= u(\rho)+\int_\rho^R u'(t)dt,
$$
we deduce that
\begin{equation}\label{p17}
\begin{aligned}
u(R^{-})< \infty &\Longleftrightarrow \int_\rho^R (R-t)^{-\frac{1}{(\sigma-1)(p-1-\alpha)}}dt\\
& \Longleftrightarrow \int_0^1 s^{-\frac{1}{(\sigma-1)(p-1-\alpha)}}ds< 0\\
& \Longleftrightarrow \sigma> \frac{p-\alpha}{p-1-\alpha},
\end{aligned}
\end{equation}
and similarly
\begin{equation}\label{p18}
u(R^{-})= \infty \Longleftrightarrow \sigma \leq \frac{p-\alpha}{p-1-\alpha},
\end{equation}
From \eq{p16}, \eq{p17} and \eq{p18} we deduce that:

There are solutions $u(R^{-})< \infty$ and $v(R^{-})=\infty \Longleftrightarrow \sigma> \frac{p-\alpha}{p-1-\alpha}$.

\medskip

There are solutions $u(R^{-})=v(R^{-})=\infty \Longleftrightarrow 1<\sigma\leq \frac{p-\alpha}{p-1-\alpha}$.

\medskip

All solutions are bounded $\Longleftrightarrow \sigma \leq 1$. Since
$\delta \neq 0$, we rule out the possibility $\sigma=1$. Hence, all
positive radial solutions of \eqref{sys0} are bounded if and only if
$\sigma < 1$.

\medskip

Using the definition of $\sigma$ in \eqref{p9a} we conclude
(ii)-(iv). \hfill\qed

\medskip
\end{enumerate}

\section{Proof of Theorem \ref{thm2}}

Assume first that \eqref{nonct} holds. As argued in the beginning of the proof of Theorem \ref{thm1}
we are able to construct a non-constant positive radial solution in a maximal ball. By construction,
each component of such solution is increasing and by Theorem \ref{thm1}(ii) the solution is bounded.
Thus, the maximal domain of existence must be the whole space $\R^N$.

Conversely, assume that \eqref{nonct} does not hold and there exists
a non-constant global positive radial solution $(U,V)$ of
\eqref{sys0}. In order to reach a contradiction, we discuss
separately the following three cases.
\medskip

\noindent{\bf Case 1:} $0\leq \alpha< p-1$ and $mq> (p-1-\alpha)(p-1-\beta)$.

\medskip

From Theorem \ref{thm1} there exists a positive radial solution of \eqref{sys0} such that $v(1^-)=\infty$.

For any $\lambda>0$ set
$$
U_\lambda(x)=\lambda^{-1-\frac{p(m+1)-(1+\beta)}{\delta}}U(\lambda x)\,,\quad V_\lambda(x)=\lambda^{-\frac{p(p-1-\alpha)+q}{\delta}}V(\lambda x).
$$
Then, $(U_\lambda,V_\lambda)$ is a non-constant global positive radial global solution of \eqref{sys0}.
Replacing $(U,V)$ by $(U_\lambda,V_\lambda)$ for
$\lambda>0$ small enough, we may assume that $V(0)>v(0)$.

Let
\begin{equation}\label{rr}
R:=\sup\Big\{r\in (0,1): V(t)>v(t)\;\;\mbox{ in }(0,r)\Big\}.
\end{equation}
Clearly, since $V(0)>v(0)$, we have $0<R\leq 1$. We claim that $R=1$. Assuming the contrary, from \eqref{p3.1},
for all $0<r<R$ we obtain
$$
\Big[r^\gamma W^{p-1-\alpha}\Big]'=\frac{p-1-\alpha}{p-1} r^\gamma V^m(r)>\frac{p-1-\alpha}{p-1} r^\gamma v^m(r)=\Big[r^\gamma w^{p-1-\alpha}\Big]',
$$
where, as in the previous section we denote $W=U'$ and $w=u'$. An integration over $[0,r]$, $0<r\leq R$, yields
$W>w$ on $(0,R]$, which together with the second equation of \eqref{p3.1} implies
$$
\Big[r^{N-1} (V')^{p-1}\Big]'=r^{N-1} W^q(r)V^\beta(r)>r^{N-1} w^q(r)v^\beta(r)
=\Big[r^{N-1} (v')^{p-1}\Big]'\quad\mbox{ for all }0<r\leq R.
$$
As before, this leads to $V'>v'$ on $(0,R]$ and then $V>v$ on $[0,R]$ which contradicts the
maximality of $R$ in \eqref{rr}. Hence $R=1$, so $V>v$ on $(0,1)$. This yields $V(1^-)=\infty$ which is a
contradiction with the fact that $V$ is defined on the whole positive semiline.

\medskip

\noindent{\bf Case 2:} $\alpha> p-1$.

\medskip

By letting  $W=U'$ as in the proof of Lemma \ref{firstestim}, we have that $r\longmapsto r^{N-1}W^{p-1}$ is
nondecreasing, so there exists
$$
L:=\lim_{r\to \infty
} r^{N-1}W^{p-1}(r)\in (0,\infty].
$$
As in the proof of Theorem \ref{thm1}. we rewrite the first equation of \eqref{eqthm2} as
\begin{equation}\label{primgama}
\Big[r^{\gamma} W^{p-1-\alpha}\Big]'=\frac{\gamma}{N-1}r^{\gamma}V^{m}(r)\quad\mbox{ for all } r>0,
\end{equation}
where
$$
\gamma=\frac{(N-1)(p-1-\alpha)}{p-1}<0.
$$
Integrating in \eqref{primgama} over $[r,\infty]$, $r>0$, we find
$$
r^{\gamma}
W^{p-1-\alpha}(r)=L^{\frac{p-1-\alpha}{p-1}}+\frac{|\gamma|}{N-1}\int_r^\infty
t^\gamma V^m(t) dt\quad\mbox{ for all }r>0.
$$
Now, using the fact that $v$ is increasing we have
$$
r^{\gamma} W^{p-1-\alpha}(r) \geq  \frac{|\gamma|}{N-1} V^m(r)
\int_r^\infty t^\gamma dt\quad\mbox{ for all }r>0.
$$
In particular, the integral must be convergent, so $\gamma<-1$ and we deduce
\begin{equation}\label{wgama}
W^{p-1-\alpha}(r) \geq  \frac{\gamma}{(N-1)(\gamma+1)} rV^m(r)
\quad\mbox{ for all }r>0.
\end{equation}
We now use \eqref{wgama} into \eqref{primgama}. Since $\gamma<0$ we find
$$
\begin{aligned}
\frac{\gamma}{N-1}V^{m}(r)&=
\Big[W^{p-1-\alpha}\Big]'(r)+\frac{\gamma}{r} W^{p-1-\alpha}(r)\\
& \leq \Big[W^{p-1-\alpha}\Big]'(r)+\frac{\gamma^2}{(N-1)(\gamma+1)} V^m(r) \quad\mbox{ for all }r>0.
\end{aligned}
$$
Hence,
$$
\Big[W^{p-1-\alpha}\Big]'(r)\geq \frac{\gamma}{(N-1)(\gamma+1)} V^m(r)> 0 \quad\mbox{ for all }r>0.
$$
This shows that $W^{p-1-\alpha}$ is increasing, so $W$ must be decreasing. Since $W\geq 0$ and $W(0)=0$, it
follows that $W\equiv 0$, that is, $U\equiv U(0)>0$ is constant, contradiction.

\noindent{\bf Case 3:}  $\alpha=p-1$.

As above, $U'> 0$ and $V'> 0$ and from the first equation of \eqref{eqthm2} we find
$$
(p-1)\frac{U''(r)}{U'(r)}+\frac{N-1}{r}=V^m(r)\quad\mbox{ for all }r>0.
$$
Integrating over $[0,1]$ we deduce
$$
\ln\Big[r^{N-1}(U')^{p-1}(r)\Big]\Big|_{0}^{1}=\int_{0}^{1}V^m(t)dt,
$$
which is a contradiction since $U'(0)=0$ and the right-hand side of the above equality is finite.
\hfill\qed

\medskip

\noindent{\bf Remark.} The approach in Case 3 above shows in fact that if $\alpha=p-1$ then system \eqref{sys0}
has no non-constant positive radial solutions in any ball.
\medskip

\section{Proof of Theorem \ref{thm3}}

Assume $(u,v)$ is a non-constant global positive radial solution of
\eqref{sys0}. Let $t= \ln(r)\in \R$ and define the new functions
$X,Y,Z,W$ by
\begin{equation}\label{xyzw}
X(t)= \frac{ru'(r)}{u(r)},\;\;Y(t)= \frac{rv'(r)}{v(r)},\;\;Z(t)=\frac{rv^{m}(r)}{(u'(r))^{p-1-\alpha}} \mbox{ and }W(t)=\frac{rv^{\beta}(r)u'^{q}(r)}{v'^{p-1}(r)}.
\end{equation}
A direct calculation shows that $(X,Y,Z,W)$ satisfies
\begin{equation}\label{th2a}
\left\{
\begin{aligned}
&X_{t}= X\Big(\frac{p-N}{p-1}-X+\frac{1}{p-1}Z\Big) \quad\mbox{ for all } t\in \R, \\
&Y_{t}= Y\Big(\frac{p-N}{p-1}-Y+\frac{1}{p-1}W\Big) \quad\mbox{ for all } t\in \R, \\
&Z_{t}= Z\Big(\frac{(p-1)N-(N-1)\alpha}{p-1}-\frac{p-1-\alpha}{p-1}Z+mY\Big) \quad\mbox{ for all } t\in \R, \\
&W_{t}= W\Big(\frac{(p-1)N-q(N-1)}{p-1}+\beta Y+\frac{q}{p-1}Z-W\Big) \quad\mbox{ for all } t\in \R.
\end{aligned}
\right.
\end{equation}

By L'Hopital's rule we have
\begin{equation}\label{limit}
\lim_{t\rightarrow \infty}X(t)= \lim_{r\rightarrow \infty}\frac{ru'(r)}{u(r)}=\lim_{r\rightarrow \infty}\Big(1+\frac{ru''(r)}{u'(r)}\Big)=\lim_{t\rightarrow \infty}\Big(\frac{1}{p-1}Z(t)+\frac{p-N}{p-1}\Big),
\end{equation}
provided $\lim_{t\rightarrow \infty}Z(t)$ exists. In the following we shall study the system consisting of
the last three equations of \eq{th2a} which we write
\begin{equation}\label{th2b}
\zeta_{t}= g(\zeta) \quad\mbox{ in } \R,
\end{equation}
where
\begin{equation}\label{zet}
\zeta(t)=\left[\begin{array}{c}Y(t)\\Z(t)\\W(t)\end{array}\right] \quad\mbox{ and } \quad g(\zeta)=  \left[\begin{array}{c} Y\Big(\frac{p-N}{p-1}-Y+\frac{1}{p-1}W\Big)\\Z\Big(\frac{(p-1)N-(N-1)\alpha}{p-1}-\frac{p-1-\alpha}{p-1}Z+mY\Big) \\  W\Big(\frac{(p-1)N-q(N-1)}{p-1}+\beta Y+\frac{q}{p-1}Z-W\Big) \end{array}\right].
\end{equation}
Note that the system \eq{th2b} is cooperative and irreducible as described in the Appendix.
Also, the only equilibrium point of \eq{th2b}-\eqref{zet} with all components being strictly positive is
\begin{equation}\label{eqinf}
P_\infty=\left[\begin{array}{c}Y_\infty\\Z_\infty\\W_\infty\end{array}\right],
\end{equation}
where
\begin{equation}\label{th2c}
\left\{
\begin{aligned}
&\frac{p-N}{p-1}-Y_\infty+\frac{1}{p-1}W_\infty=0,  \\
&\frac{(p-1)N-(N-1)\alpha}{p-1}-\frac{p-1-\alpha}{p-1}Z_\infty+mY_\infty=0, \\
&\frac{(p-1)N-q(N-1)}{p-1}+\beta Y_\infty+\frac{q}{p-1}Z_\infty-W_\infty=0.
\end{aligned}
\right.
\end{equation}
Solving \eqref{th2c} we find
\begin{equation}\label{th2d}
\left\{
\begin{aligned}
&Y_\infty=\frac{p(p-1-\alpha)+q}{\delta}, \\
&Z_\infty=\frac{m(p-1)}{p-1-\alpha}Y_\infty+N+\frac{\alpha}{p-1-\alpha}, \\
&W_\infty= (p-1)Y_\infty+N-p.
\end{aligned}
\right.
\end{equation}

\begin{lemma}\label{l2b}
The equilibrium point $P_\infty$ is asymptotically stable.
\end{lemma}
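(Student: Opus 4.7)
The plan is to linearize $g$ at $P_\infty$ and prove that every eigenvalue of $J := Dg(P_\infty)$ has strictly negative real part; asymptotic stability of $P_\infty$ then follows from the classical linearization principle for smooth ODEs (see also the Perron--Frobenius type results for cooperative irreducible systems collected in the appendix).

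First I would compute the Jacobian. The off-diagonal entries of $Dg$ come directly from the couplings between $Y$, $Z$, $W$ in \eqref{zet}, while the diagonal entries contain the three bracketed factors of $g$; at $P_\infty$ these brackets vanish by the equilibrium relations \eqref{th2c}, leaving only the quadratic self-interaction term. After this simplification one finds
\begin{equation*}
J=\begin{pmatrix} -Y_\infty & 0 & \dfrac{Y_\infty}{p-1} \\ mZ_\infty & -\dfrac{p-1-\alpha}{p-1}\,Z_\infty & 0 \\ \beta W_\infty & \dfrac{q}{p-1}\,W_\infty & -W_\infty \end{pmatrix}.
\end{equation*}
The structural observation that drives the rest of the argument is that $J$ is a Metzler matrix---all off-diagonal entries are nonnegative---which mirrors the cooperative and irreducible character of \eqref{th2b}. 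For Metzler matrices, $J$ is Hurwitz if and only if $-J$ is a nonsingular M-matrix, equivalently, if and only if every leading principal minor of $-J$ is strictly positive.

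I would then verify the three leading principal minors of $-J$ in turn. The $1\times 1$ minor is $Y_\infty>0$, and the $2\times 2$ leading minor evaluates to $\tfrac{p-1-\alpha}{p-1}Y_\infty Z_\infty$, positive by the hypothesis $\alpha<p-1$. The main obstacle is the $3\times 3$ determinant: expanding along the first row and regrouping, the cross terms should collapse via the algebraic identity $(p-1-\alpha)(p-1-\beta)-mq=\delta$ to produce
\begin{equation*}
\det(-J)=\dfrac{\delta\,Y_\infty Z_\infty W_\infty}{(p-1)^2},
\end{equation*}
which is strictly positive precisely because $\delta>0$. As a side benefit, $\delta>0$ together with $p-1-\alpha>0$ automatically forces $p-1-\beta>0$, which would be needed should one instead check all $2\times 2$ principal minors rather than only the leading one.

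With the M-matrix property of $-J$ established, every eigenvalue of $J$ lies in the open left half-plane, and the linearization theorem for hyperbolic equilibria of smooth ODEs yields the local asymptotic stability of $P_\infty$, completing the proof.
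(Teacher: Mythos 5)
Your proposal is correct, but it proves Hurwitz stability by a genuinely different route from the paper. The paper writes out the characteristic polynomial $P(\lambda)=\lambda^3+a\lambda^2+b\lambda+c$ of $M_\infty$, checks $a,b,c>0$ and the Routh--Hurwitz-type inequality $ab>c$ (via two AM--GM estimates, after observing that $\delta>0$ and $\alpha<p-1$ force $p-1-\beta>0$), and then locates the roots by an elementary argument on the sign of $P(-a)$. You instead exploit the Metzler structure of $J$ (the linear shadow of cooperativity) and invoke the Fiedler--Pt\'ak/Berman--Plemmons characterization: a $Z$-matrix is a nonsingular M-matrix iff its leading principal minors are positive. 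Your three minors are computed correctly, and your $\det(-J)=\delta\,Y_\infty Z_\infty W_\infty/(p-1)^2$ is exactly the constant term $c$ in the paper's polynomial, so both proofs ultimately hinge on $\delta>0$ at the same point; your $2\times 2$ minor $\tfrac{p-1-\alpha}{p-1}Y_\infty Z_\infty$ plays the role of the paper's positivity of $a$ and $b$. What your approach buys is brevity and conceptual transparency (no AM--GM, and the cooperative structure is used directly); what it costs is reliance on a nontrivial piece of matrix theory that the paper's appendix does not actually contain -- the linearization principle and the M-matrix equivalences would need explicit references -- whereas the paper's argument is self-contained modulo elementary facts about cubics. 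One small point to make explicit if you write this up: you need $Y_\infty,Z_\infty,W_\infty>0$ for the Metzler property and the minors, which follows from \eqref{th2d} together with $\delta>0$ and $\alpha<p-1$ (for $W_\infty$ one uses $Y_\infty>p/(p-1)$).
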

\begin{proof}
Using \eq{th2c} we compute the linearized matrix of \eq{th2b} at $P_\infty$  as
$$
M_\infty=\left[\begin{array}{ccc}-Y_\infty&0&\frac{1}{p-1}Y_\infty\\mZ_\infty&-\frac{p-1-\alpha}{p-1}Z_\infty&0\\\beta W_\infty&\frac{q}{p-1}W_\infty&-W_\infty\end{array}\right].
$$
The characteristic polynomial of $M_\infty$ is
$$
P(\lambda)=\det(\lambda I-M)=\lambda^{3}+a\lambda^{2}+b \lambda+c,
$$
where
\begin{equation*}
\left\{
\begin{aligned}
a&=Y_\infty+\frac{p-1-\alpha}{p-1}Z_\infty+W_\infty, \\
b&=\frac{p-1-\alpha}{p-1}Y_\infty Z_\infty+\frac{p-1-\beta}{p-1}Y_\infty W_\infty+\frac{p-1-\alpha}{p-1}Z_\infty W_\infty, \\
c&= \frac{\delta}{(p-1)^2}Y_\infty Z_\infty W_\infty.
\end{aligned}
\right.
\end{equation*}
Since $Y_\infty$, $Z_\infty$, $W_\infty>0$ and $p-1-\beta>0$ (which follows easily from $\delta>0$) we have
$$
a\geq \frac{p-1-\beta}{p-1}Y_\infty+\frac{p-1-\alpha}{p-1}Z_\infty+\frac{p-1-\beta}{p-1}W_\infty.
$$
Thus, by AM-GM inequality we find
$$
a\geq 3\frac{(p-1-\alpha)^{\frac{1}{3}}(p-1-\beta)^{\frac{2}{3}}}{p-1} (Y_\infty Z_\infty W_\infty)^{\frac{1}{3}}.
$$
Similarly, by AM-GM we obtain
$$
b\geq
3\frac{(p-1-\alpha)^{\frac{2}{3}}(p-1-\beta)^{\frac{1}{3}}}{p-1} (Y_\infty Z_\infty W_\infty)^{\frac{2}{3}}.
$$
We now multiply the above estimates to deduce
$$
ab\geq 9 \frac{(p-1-\alpha)(p-1-\beta)}{(p-1)^2} (Y_\infty Z_\infty W_\infty)>9c.
$$

\medskip

We claim that all three roots $\lambda_1$, $\lambda_2$ and $\lambda_3$ of the characteristic
polynomial $P(\lambda)$ of $M_\infty$ have negative real part. Indeed, if $\lambda_i \in \R$, for all
$i=1$, $2$, $3$ then, since $P(\lambda)>0$ for all $\lambda \geq 0$ it follows that $\lambda_i< 0$ for all
$i=1$, $2$, $3$. If $P$ has exactly one real root, say $\lambda_1 \in \R$, then ${\rm Re}(\lambda_2)={\rm Re}(\lambda_3)$. Using $P(-a)=-ab+c<0$, it follows that $\lambda_1> -a$. Since $\lambda_1+\lambda_2+\lambda_3= -a$ we easily deduce that ${\rm Re}(\lambda_2)={\rm Re}(\lambda_3)<0$. This proves that $P_\infty$ is asymptotically stable.

\end{proof}

\medskip

The following result is crucial in our analysis to establish the behavior of $\zeta(t)$ as $t\rightarrow \infty$.

\begin{lemma}\label{l2c}
For all $t\in \R$ we have
\begin{equation}\label{infy}
0<Y(t)< Y_\infty\,,\qquad N+\frac{\alpha}{p-1-\alpha}<Z(t)<Z_\infty\,,\qquad N<W(t)<W_\infty.
\end{equation}
\end{lemma}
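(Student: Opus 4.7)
The plan is to verify the three pairs of bounds in turn. Positivity of $Y, Z, W$ is immediate from \eqref{xyzw} since $u, v, u', v'$ are all strictly positive on $(0, \infty)$. The two lower bounds are direct rewrites of Lemma \ref{firstestim}: dividing \eqref{l01} by $(u'(r))^{p-1-\alpha}$ gives $Z(t) > N + \alpha/(p-1-\alpha)$, and dividing \eqref{l02} by $(v'(r))^{p-1}$ gives $W(t) > N$.

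The real work is in the upper bounds, which I would approach by combining the cooperative/irreducible structure of \eqref{th2b} with an asymptotic analysis at $t = -\infty$ (i.e. $r \to 0$). Integrating the first equation of \eqref{p3.1} from $0$ to small $r$ and using $v(r) \to v(0) > 0$ yields $(u'(r))^{p-1-\alpha} \sim \tfrac{p-1-\alpha}{N(p-1-\alpha)+\alpha} v^m(0)\, r$, and substituting this into the second equation of \eqref{p3.1} gives $(v'(r))^{p-1} \sim C\, r^{1+q/(p-1-\alpha)}$ for an explicit constant $C$. From these expansions I can read off
$$
\lim_{t\to -\infty} \bigl(Y(t), Z(t), W(t)\bigr) = \Bigl(0,\; N + \tfrac{\alpha}{p-1-\alpha},\; N + \tfrac{q}{p-1-\alpha}\Bigr) =: P_0.
$$
A short algebraic check using \eqref{th2d} shows $P_0 < P_\infty$ componentwise: the first coordinate inequality is trivial; the second gives $Z_\infty - Z_0 = m(p-1)Y_\infty/(p-1-\alpha) > 0$; the third, after clearing denominators, factors as $W_\infty - W_0 = [p(p-1-\alpha)+q][\beta(p-1-\alpha)+qm]/[(p-1-\alpha)\delta] > 0$ under the standing assumptions $\delta > 0$, $\alpha < p-1$, $\beta \geq 0$, $q, m > 0$.

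With $\zeta(t) \to P_0 < P_\infty$ established, fix any $T \in \mathbb{R}$ and choose $t_0 < T$ so small that $\zeta(t_0) < P_\infty$ componentwise. The system \eqref{th2b} is cooperative and irreducible on the positive octant (the off-diagonal entries of $Dg$ are non-negative there), and $P_\infty$ is a constant equilibrium. The comparison principle for cooperative flows collected in Appendix~A yields $\zeta(t) \leq P_\infty$ for $t \geq t_0$, and irreducibility upgrades this to strict inequality for $t > t_0$. In particular $\zeta(T) < P_\infty$, and since $T$ is arbitrary the three upper bounds hold throughout $\mathbb{R}$.

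The main obstacle is the asymptotic analysis near $r = 0$, specifically propagating the expansion of $u'$ through the second ODE of \eqref{p3.1} to extract the precise limit of $W$, together with the algebraic verification that this limit is strictly below $W_\infty$. Once the triple $P_0$ is identified and shown to lie strictly below $P_\infty$ in every coordinate, the monotone dynamical systems step is routine.
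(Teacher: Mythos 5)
Your proposal is correct, and for the hardest part it takes a genuinely different route from the paper. The lower bounds and the final comparison step coincide with the paper's argument (the paper's Step 1 and Step 4: comparison of $\zeta$ with the constant solution $P_\infty$ on intervals reaching back to $-\infty$; note that Theorem \ref{comparis} already gives the strict inequality from a strict initial inequality, so your appeal to irreducibility there is superfluous though harmless). The difference is in how one gets $\zeta$ strictly below $P_\infty$ near $t=-\infty$. You compute the full limit $\lim_{t\to-\infty}\zeta(t)=P_0=\bigl(0,\,N+\tfrac{\alpha}{p-1-\alpha},\,N+\tfrac{q}{p-1-\alpha}\bigr)$ by integrating both equations of \eqref{p3.1} near the origin and propagating $u'(r)\sim c_1 r^{1/(p-1-\alpha)}$ into the second equation; your algebraic check $W_\infty-W_0=\frac{[p(p-1-\alpha)+q][\beta(p-1-\alpha)+qm]}{\delta(p-1-\alpha)}>0$ is exactly right. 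The paper instead computes only $\lim Y=0$ and $\lim Z=N+\tfrac{\alpha}{p-1-\alpha}$ (via the Generalized Mean Value Theorem rather than direct integration) and handles $W$ by contradiction: assuming $W\geq W_\infty$ near $-\infty$ forces $W_t<0$ there, hence a limit $L$ exists, and a Cauchy mean-value computation shows $L<W_\infty$, so only a sequence $t_j\to-\infty$ with $\zeta(t_j)<P_\infty$ is produced. Your version is more direct and yields strictly more information (the precise behavior of $v'$ at the origin); the price is that you must justify passing the asymptotics of $u'$ through the second integral, which is routine here because the limiting integrand $t^{N-1+q/(p-1-\alpha)}$ is integrable at $0$, so the standard comparison of integrals of asymptotically equivalent positive functions applies.
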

\begin{proof}
We divide our arguments into four steps.
\medskip

\noindent{\it Step 1: \it Preliminary Facts:} $Z(t)> N+\frac{\alpha}{p-1-\alpha}$, $W(t)> N$ for all $t\in \R$
and $\lim_{t\rightarrow -\infty}Y(t)=0$.

\medskip

The lower bounds for $Z$ and $W$ follow from \eqref{l01} and \eqref{l02} in Lemma \ref{firstestim}.
Since $v'(0)=0$ and $v(0)>0$ we have $\lim_{t\rightarrow -\infty}Y(t)= \lim_{r\rightarrow 0}\frac{rv'(r)}{v(r)}= 0$.

\bigskip

\noindent{\it Step 2: There exists $T\in \R$ such that $Z(t)< Z_\infty$ for all $t\in (-\infty, T]$.}
\medskip

The conclusion of this Step follows immediately once we prove that
\begin{equation}\label{zlim}
\lim_{t\to-\infty} Z(t)=N+\frac{\alpha}{p-1-\alpha}.
\end{equation}

Let $t\in (-\infty, 0)$ and $r= e^t \in (0, 1)$. We use the Generalized Mean Value
Theorem\footnote{Generalized Mean Value Theorem (or Cauchy's Theorem) states that if
$f$,$g: [a, b]\rightarrow \R$ are differentiable functions on $(a, b)$ and continuous on $[a, b]$,
then there exists $c\in (a, b)$ such that $\frac{f(b)-f(a)}{g(b)-g(a)}= \frac{f'(c)}{g'(c)}$.}
\cite[Theorem 5.9, page 107]{R1976}  over the interval $[0, r]$. Thus, there exists $c\in (0, r)$ such that
\begin{equation*}
\begin{aligned}
Z(t)&= \frac{rv^{m}(r)}{u'^{p-1-\alpha}(r)}=\frac{\displaystyle\frac{d}{dr}\Big[rv^{m}(r)\Big](c)}{\displaystyle \frac{d}{dr}\Big[u'^{p-1-\alpha}(r)\Big](c)}\\
&= \frac{v^m(c)+cmv^{m-1}(c)v'(c)}{(p-1-\alpha)u'^{(p-2-\alpha)}(c)u''(c)}\\
&= \frac{p-1}{p-1-\alpha}\frac{v^m(c)u'^{\alpha}(c)+cmv^{m-1}(c)v'(c)u'^{\alpha}(c)}{(u'^{p-1})'(c)}.
\end{aligned}
\end{equation*}
Using the first equation in \eq{p1} we find
$$
Z(t)= \frac{p-1}{p-1-\alpha}\frac{v^m(c)u'^{\alpha}(c) \Big(1+m\frac{cv'(c)}{v(c)}\Big)}{\displaystyle v^m(c)u'^{\alpha}(c)-\frac{N-1}{c}u'^{p-1}(c)},
$$
and so,
\begin{equation}\label{th2f}
Z(t)= \frac{p-1}{p-1-\alpha}\frac{Z({\rm ln}c)}{Z({\rm ln}c)-(N-1)} \Big(1+mY({\rm ln}c)\Big).
\end{equation}
Since
\begin{equation}\label{z1}
Z(t)>N+\frac{\alpha}{p-1-\alpha}\quad\mbox{ for all }\quad t\in\R,
\end{equation}
 we have
$$
\frac{Z({\rm ln}c)}{Z({\rm ln}c)-(N-1)}<\Big( N+\frac{\alpha}{p-1-\alpha}\Big)\frac{p-1-\alpha}{p-1}.
$$
Thus from \eq{th2f} we find
\begin{equation}\label{z2}
\lim{\rm sup}_{t\rightarrow -\infty}Z(t)\leq \Big( N+\frac{\alpha}{p-1-\alpha}\Big) \big(1+\lim_{t\rightarrow -\infty}Y(t)\big)= N+\frac{\alpha}{p-1-\alpha}.
\end{equation}
Now, combining \eqref{z1} and \eqref{z2} we obtain that \eqref{zlim} holds.
It follows that there exists $T\in \R$ such that $Z(t)< Z_\infty$ for all $t\leq T$.

\bigskip

\noindent{\it Step 3:
There exists a sequence $t_j \rightarrow -\infty$ such that
\begin{equation}\label{tj}
Y(t_j)< Y_\infty\,, \quad Z(t_j)< Z_\infty \quad \mbox{ and } \quad W(t_j)< W_\infty \quad\mbox{ for all } j\geq 1.
\end{equation}
}
Assume the above assertion is not true. In view of the previous steps and by taking $T\in \R$ found at Step 2
small enough, we may assume
\begin{equation}\label{asump}
Y(t)< Y_\infty\,, \quad Z(t)< Z_\infty \quad \mbox{ and }\quad W(t)\geq W_\infty \quad\mbox{ for all } t\in (-\infty, T].
\end{equation}
Using this fact and last equation in \eq{th2a} we deduce $W_t< 0$ on $(-\infty, T]$. Hence, $W$ is decreasing
in a neighbourhood of $-\infty$ and there exists
$$
L:= \lim_{t\rightarrow -\infty} W(t)= \lim_{r\rightarrow 0}\frac{rv^{\beta}(r)u'^{q}(r)}{v'^{p-1}(r)}.
$$
Let $t\in (-\infty, T]$ and $r= e^{t}$. Applying the Generalized Mean Value Theorem as in the previous step
and using the second equation of \eqref{p1} we find $c\in (0, r)$ such that
\begin{equation*}
\begin{aligned}
W(t)&= \frac{rv^{\beta}(r)u'^{q}(r)}{v'^{p-1}(r)}= \frac{\displaystyle \frac{d}{dr}\Big[rv^{\beta}(r)u'^{q}(r)\Big](c)}{\displaystyle \frac{d}{dr}\Big[v'^{p-1}(r)\Big](c)}\\
&=\frac{v^{\beta}(c)u'^{q}(c)+\beta cv^{\beta-1}(c)v'(c)u'^{q}(c)+qcv^{\beta}(c)u'^{q-1}(c)u''(c)}{v^{\beta}(c)u'^{q}(c)-\frac{N-1}{c}v'^{p-1}(c)}.
\end{aligned}
\end{equation*}
Using the first equation of \eq{p1} we further compute
\begin{equation}\label{th2g}
W(t)= \frac{W({\rm ln}c)}{W({\rm ln}c)-(N-1)}\left[1+\beta Y({\rm ln}c)+\frac{q}{p-1}Z({\rm ln}c)-\frac{q(N-1)}{p-1}\right].
\end{equation}
Recall that by Step $1$, we have $Z> N$ and $W> N$ so that right hand side of \eq{th2g} is positive.
Passing to the limit with $t\rightarrow -\infty$ (note that this implies $c\rightarrow 0$) and using
$\lim_{t\rightarrow -\infty}Z(t)< Z_\infty$ and $\lim_{t\rightarrow -\infty}Y(t)= 0$  we find from \eq{th2g} that
\begin{equation}\label{th2h}
L= \lim_{t\rightarrow -\infty}W(t)\leq \frac{L}{L-(N-1)}\left[1+\frac{q}{p-1}Z_\infty-\frac{q(N-1)}{p-1}\right].
\end{equation}
Hence, by \eq{th2h} and the last equation of \eq{th2c} we find
$$
\begin{aligned}
L&\leq N+\frac{q}{p-1}Z_\infty-\frac{q(N-1)}{p-1}\\
&< \frac{N(p-1)-q(N-1)}{p-1}+\beta Y_\infty+\frac{q}{p-1}Z_\infty= W_\infty.
\end{aligned}
$$
Thus $L< W_\infty$ which, in light of the fact that $W$ is decreasing on $(-\infty, T]$ implies $W(t)< W_\infty$
for all $t\in (-\infty, T]$, a contradiction with \eqref{asump}.
\bigskip

\noindent{\it Step 4:  Conclusion of the proof. }
\medskip

Using the comparison result in Theorem \ref{comparis} on each of the intervals $[t_j,\infty)$ we deduce
\begin{equation}\label{et}
Y(t)< Y_\infty\,, \quad Z(t)< Z_\infty \quad \mbox{ and } \quad W(t)< W_\infty \quad\mbox{ for all } t\geq t_j.
\end{equation}
Since $t_j\to -\infty$ it follows that the estimates in \eqref{et} hold for all $t\in \R$ and this together with
Step 1 proves \eqref{infy}.
\end{proof}

\noindent{\bf Proof of Theorem \ref{thm3}
completed}. Let $(u,v)$ be a non-constant global positive radial
solution of \eqref{sys0}. Denote by $(X,Y,Z,W)$ the solution of
\eqref{th2a} corresponding to $u$ and $v$ as described in
\eqref{xyzw}. Then $
\zeta(t)=\left[\begin{array}{c}Y(t)\\Z(t)\\W(t)\end{array}\right] $
is a solution of \eqref{th2b}-\eqref{zet}. Thus, by  Lemma \ref{l2c}
we have
$$
P_*:=\left[\begin{array}{c}0\\ \displaystyle N+\frac{\alpha}{p-1-\alpha}\\N\end{array}\right]<\zeta(0).
$$
By Theorem \ref{lebesgue} there exists a set $\Sigma\subset \R^3$ of Lebesgue measure zero  such that
\begin{equation}\label{omegaa}
\omega(\widetilde P)\subseteq E\quad\mbox{ for all }\quad \widetilde P\in [P_*,P_\infty]\setminus\Sigma,
\end{equation}
where $E$ is the set of equilibrium points associated with \eqref{th2b}-\eqref{zet}.
For $\widetilde P\in [P_*,P_\infty]\setminus\Sigma$ denote by
$$
\Phi(t,\widetilde P)=\left[\begin{array}{c}\widetilde{Y}(t)\\\widetilde{Z}(t)\\\widetilde{W}(t)\end{array}\right]
$$
the flow of \eqref{th2b}  associated with the initial data $\widetilde P$. Since $\widetilde P\geq P_*$, by the
comparison result in Theorem \ref{comparis} it follows that
$$
\Phi(t,\widetilde P)\geq \left[\begin{array}{c}0\\ \displaystyle N+\frac{\alpha}{p-1-\alpha}\\0\end{array}\right]\quad\mbox{ for all }t\geq 0.
$$
Therefore, the only equilibrium points that $\omega(\widetilde P)$ may approach must be non-negative and
have the second component greater than or equal to $N+\frac{\alpha}{p-1-\alpha}$. It follows that
$$
\omega(\widetilde P)\subseteq \{P_1,P_2,P_3,P_\infty\},
$$
where
$$
P_1=\left[\begin{array}{c}0\\ \displaystyle
N+\frac{\alpha}{p-1-\alpha}\\0\end{array}\right]\;,\quad
P_2=\left[\begin{array}{c}0\\ \displaystyle
N+\frac{\alpha}{p-1-\alpha}\\
\displaystyle N+\frac{q}{p-1-\alpha}\end{array}\right]\;,\quad
P_3=\left[\begin{array}{c}\displaystyle \frac{p-N}{p-1}\\
\displaystyle
N+\frac{\alpha+m(p-N)}{p-1-\alpha}\\0\end{array}\right]
$$
and $P_\infty$ is given by \eqref{eqinf}. Note, that $P_3$ has all components non-negative if and only of $p\geq N$.

We claim that
\begin{equation}\label{eqfin}
\omega(\widetilde P)=\{P_\infty\}\quad\mbox{ for all }\quad
\widetilde P\in [P_*,P_\infty]\setminus\Sigma.
\end{equation}

First we note that if $P_\infty\in \omega(\widetilde P)$ then, since $P_\infty$ is asymptotically stable, it follows
that  $\omega(\widetilde P)=\{P_\infty\}$. Assume in the following that $P_\infty\not \in \omega(\widetilde P)$ so
$\omega(\widetilde P)\subseteq \{P_1,P_2,P_3\}$.

If $\{P_1,P_2\}\subset \omega(\widetilde P)$ or $\{P_2,P_3\}\subset \omega(\widetilde P)$ then $\widetilde W$
converges along a subsequence to 0 and to
$N+\frac{q}{p-1-\alpha}$.  By the Intermediate Value Theorem we deduce that for all $0<\tau<N+\frac{q}{p-1-\alpha}$
there exists a sequence $t_j\to \infty$ such that $\widetilde W(t_j)=\tau$ which contradicts the fact that
$\omega(\tilde P)$ is finite. Similarly, if $\{P_1,P_3\}\subset \omega(\widetilde P)$ we deduce that $p>N$
and for all $0<\gamma<\frac{p-N}{p-1}$ there exists a sequence $t_j\to \infty$ such that $\widetilde Y(t_j)=\gamma$
which is again a contradiction.

It follows that $\omega(\widetilde P)$ is a singleton. Let us show that in this situation we again raise a
contradiction. Indeed, if for instance $\omega(\widetilde P)=\{P_2\}$ then, as $t\to \infty$, we have
$$
\widetilde Y(t)\to 0\,, \quad \widetilde Z(t)\to N+\frac{\alpha}{p-1-\alpha}\quad\mbox{and}\quad \widetilde W(t)\to N+\frac{q}{p-1-\alpha}.
$$
Then, for large $t>0$ one has
$$
\widetilde Y_t=\widetilde Y\Big(\frac{p-N}{p-1}-\widetilde Y+\frac{1}{p-1}\widetilde W\Big)>0
$$
so $\widetilde Y$ is increasing in a neighbourhood of infinity. It
follows that for large $t>0$ we have $\widetilde Y(t)\leq \lim_{s\to
\infty}\widetilde Y(s)=0$, contradiction. Similarly, if
$\omega(\widetilde P)=\{P_1\}$ or if $\omega(\widetilde P)=\{P_3\}$
we reach a contradiction. Hence, the claim \eqref{eqfin} holds.

Take now $P\in [[P_*,P_\infty]]\cap \Sigma$ and let
$\widetilde P\in [P_*,P_\infty]\setminus \Sigma$ be such that
$\widetilde P<P$. By \eqref{eqfin} and the Dichotomy Theorem
\ref{dich} we have
\begin{itemize}
\item either $\{P_\infty\}=\omega(\widetilde P)<\omega(P)$;
\item or $\omega(P)=\omega(\widetilde P)=\{P_\infty\}$.
\end{itemize}
The first alternative cannot hold since by the comparison result in
Theorem \ref{comparis} we have $\omega(P)\leq P_\infty$. It follows
that $\omega(P)=\{ P_\infty \}$ so,
$$
\omega(P)=\{P_\infty\}\quad\mbox{ for all } P\in [[P_*, P_\infty]].
$$
In particular, for $P=\zeta(0)$ we find
$$
\omega(\zeta(0))=\{P_\infty\}.
$$
Thus, as $t\to \infty$ we have
$$
Y(t)\to Y_\infty\,,\qquad Z(t)\to Z_\infty\,,\qquad W(t)\to W_\infty.
$$

By \eq{limit} there exists
$$
X_\infty:= \lim_{t\rightarrow \infty}X(t)= \frac{1}{p-1}Z_\infty+\frac{p-N}{p-1}.
$$
Observe that
$$
\frac{u^{\delta}(r)}{r^{p(m+1)-(1+\beta)+\delta}}= \frac{1}{Y^{m(p-1)}(t)Z^{p-1-\beta}(t)W^{m}(t)X^{\delta}(t)} \quad\mbox{ for all } r> 0,
$$
so
\begin{equation}\label{th2i}
\lim_{r\rightarrow \infty} \frac{u(r)}{r^{1+\frac{p(m+1)-(1+\beta)}{\delta}}}= A,
\end{equation}
where
$$
A= \frac{1}{Y_\infty^\frac{m(p-1)}{\delta}Z_\infty^{\frac{p-1-\beta}{\delta}}W_\infty^{\frac{m}{\delta}}X_\infty}\in (0, \infty).
$$
Similarly, we find
\begin{equation}\label{th2j}
\lim_{r\rightarrow \infty} \frac{v(r)}{r^{\frac{p(p-1-\alpha)+q}{\delta}}}= B,
\end{equation}
where
$$
B= \frac{1}{Y_\infty^\frac{(p-1)(p-1-\alpha)}{\delta}Z_\infty^{\frac{q}{\delta}}W_\infty^{\frac{p-1-\alpha}{\delta}}}\in (0, \infty).
$$

\section{Proof of Theorem \ref{thm4}}
The existence of a non-constant global positive radial solution
$(u,v)$ of \eqref{sys0} with $u(0)=a>0$ and $v(0)=b>0$ follows from
Theorem \ref{thm1}. First, there exists a non-constant local
positive radial solution $(u,v)$ as above as a fixed point of the
mapping given by \eqref{t1}-\eqref{t3}. In light of Theorem
\ref{thm1} (ii), such a solution must be global. We focus in the
following on the uniqueness part.

For any non-constant positive global solution $(u, v)$ of system
\eq{sys0} we denote
\begin{equation}\label{th3a}
u(r)= U(t), \;\;\; v(r)= V(t) \mbox{ where } r=t^{\theta}, \;\; \theta= -\frac{p-1}{N-p}< 0.
\end{equation}
Then $(U, V)$ satisfies (throughout this section $'$ denotes the
derivative with respect to $t$ variable)

\begin{equation}\label{th3b} \left\{
\begin{aligned}
&\Big[|U'(t)|^{p-\alpha-2}U'(t)\Big]'=\frac{p-1-\alpha}{p-1}|\theta|^{p-\alpha}t^{(\theta-1)(p-\alpha)}V^{m}(t) &&\quad\mbox{ for all } t>0,\\
&\Big[|V'(t)|^{p-2}V'(t)\Big]'=|\theta|^{p-q}t^{(\theta-1)(p-q)}V^{\beta}(t)|U'(t)|^{q} &&\quad\mbox{ for all } t>0,\\
& U'(t)<0, \; V'(t)<0,\; U(t)>0,\; V(t)>0  &&\quad\mbox{ for all } t>0,\\
&U'(\infty)=V'(\infty)=0, \; U(\infty)=u(0), \; V(\infty)=v(0).
\end{aligned}
\right.
\end{equation}
Letting $W(t)= |U'(t)|^{p-\alpha-2}U'(t)$ we transform \eq{th3b} into
\begin{equation}\label{th3c}
\left\{
\begin{aligned}
&W'(t)=\frac{p-1-\alpha}{p-1}|\theta|^{p-\alpha}t^{(\theta-1)(p-\alpha)}V^{m}(t) &&\quad\mbox{ for all } t>0,\\
&\Big[|V'(t)|^{p-2}V'(t)\Big]'=|\theta|^{p-q}t^{(\theta-1)(p-q)}V^{\beta}(t)|W(t)|^{\frac{q}{p-1-\alpha}} &&\quad\mbox{ for all } t>0,\\
& W(t)<0, \; V'(t)<0,\; V(t)>0  &&\quad\mbox{ for all } t>0,\\
&W(\infty)= 0, V'(\infty)=0, V(\infty)=v(0).
\end{aligned}
\right.
\end{equation}
Let now $a, b>0$ and $(u, v)$, $(\widetilde{u}, \widetilde{v})$ be
two pairs of non-constant global positive radial solutions of
\eq{sys0} with $u(0)= \widetilde{u}(0)= a$ and $v(0)=
\widetilde{v}(0)= b$. We want to show that $u\equiv \widetilde{u}$
and $v\equiv \widetilde{v}$.

\medskip

Let $\epsilon> 0$ and set
$$
\widehat{u}(r)= (1+\epsilon)u(r), \;\;\;\; \widehat{v}(r)= (1+\epsilon)^{\frac{p-1-\alpha}{m}}v(r).
$$
It follows that (with $r=t^\theta$ from \eqref{th3a})
\begin{equation}\label{s1}
\widehat{U}(t)= \widehat{u}(r), \; \widehat{V}(t)= \widehat{v}(r) \mbox{ and } \widehat{W}(t)= | \widehat{U}'(t)|^{p-\alpha-2} \widehat{U}'(t)
\end{equation}
satisfy
\begin{equation}\label{th3d}
\left\{
\begin{aligned}
&\widehat{W}'(t)=\frac{p-1-\alpha}{p-1}|\theta|^{p}t^{(\theta-1)(p-\alpha)}\widehat{V}^{m}(t) &&\quad\mbox{ for all } t>0,\\
&\Big[|\widehat{V}'(t)|^{p-2}\widehat{V}'(t)\Big]'= (1+\epsilon)^{\frac{\delta}{m}}|\theta|^{p-q}t^{(\theta-1)(p-q)}\widehat{V}^{\beta}(t)|\widehat{W}(t)|^{\frac{q}{p-1-\alpha}} &&\quad\mbox{ for all } t>0,\\
& \widehat W(t)<0, \; \widehat V'(t)<0,\; \widehat V(t)>0  &&\quad\mbox{ for all } t>0,\\
&\widehat{W}(\infty)= 0, \widehat{V}'(\infty)=0, \widehat{V}(\infty)= (1+\epsilon)^{\frac{p-1-\alpha}{m}}v(0).
\end{aligned}
\right.
\end{equation}
Through the same change of variable $r= t^{\theta}$ given by \eqref{th3a}, the functions
\begin{equation}\label{s2}
\widetilde{U}(t)= \widetilde{u}(r), \; \widetilde{V}(t)= \widetilde{v}(r) \mbox{ and } \widetilde{W}(t)= | \widetilde{U}'(t)|^{p-\alpha-2} \widetilde{U}'(t),
\end{equation}
satisfy
\begin{equation}\label{th3e}
\left\{
\begin{aligned}
&\widetilde{W}'(t)=\frac{p-1-\alpha}{p-1}|\theta|^{p-\alpha}t^{(\theta-1)(p-\alpha)}\widetilde{V}^{m}(t) &&\quad\mbox{ for all } t>0,\\
&\Big[|\widetilde{V}'(t)|^{p-2}\widetilde{V}'(t)\Big]'= |\theta|^{p-q}t^{(\theta-1)(p-q)}\widetilde{V}^{\beta}(t)|\widetilde{W}(t)|^{\frac{q}{p-1-\alpha}} &&\quad\mbox{ for all } t>0,\\
& \widetilde W(t)<0, \; \widetilde V'(t)<0,\; \widetilde V(t)>0  &&\quad\mbox{ for all } t>0,\\
&\widetilde{W}(\infty)= 0, \widetilde{V}'(\infty)=0, \widetilde{V}(\infty)= \widetilde{v}(0)= b.
\end{aligned}
\right.
\end{equation}
Since, $\widehat{V}(\infty)> \widetilde{V}(\infty)$ it follows from
the first equation of \eqref{th3d} and \eqref{th3e} that the set
$$
A:= \{t>0: \;\; \widehat{W}'>\widetilde{W}'  \mbox{ on } (t, \infty)\}
$$
is nonempty. We claim that $A= (0, \infty)$. Assuming the contrary, one has
$$
t_0= \inf A> 0
$$ together with
\begin{equation}\label{th3f}
\widehat{W}'(t)>\widetilde{W}'(t) \quad\mbox{ for all } t\in (t_0, \infty)\quad\mbox{ and }\quad
\widehat{W}'(t_0)= \widetilde{W}'(t_0).
\end{equation}
Using the first equation in \eq{th3d} and \eq{th3e} it follows that
\begin{equation}\label{th3g}
\widehat{V}(t)>\widetilde{V}(t) \quad\mbox{ for all } t\in (t_0, \infty)
\quad\mbox{ and }\quad
\widehat{V}(t_0)= \widetilde{V}(t_0).
\end{equation}
Integrating \eq{th3f} we find  (since $\widehat{W}(\infty)= \widetilde{W}(\infty)= 0$) that
\begin{equation}\label{th3h}
|\widehat{W}(t)|= -\widehat{W}(t)> -\widetilde{W}(t)= |\widetilde{W}(t)| \quad\mbox{ for all } t\in (t_0, \infty).
\end{equation}
Hence, from second equation of \eq{th3d}, \eq{th3e} and from \eq{th3g}, \eqref{th3h} we deduce
\begin{equation*}
\Big[|\widehat{V}'(t)|^{p-2}\widehat{V}'(t)\Big]'> \Big[|\widetilde{V}'(t)|^{p-2}\widetilde{V}'(t)\Big]' \quad\mbox{ for all } t\in (t_0, \infty).
\end{equation*}
An integration over $[t, \infty]$ in the above inequality yields
\begin{equation*}
|\widehat{V}'(t)|^{p-1}= -|\widehat{V}'(t)|^{p-2}\widehat{V}'(t)> -|\widetilde{V}'(t)|^{p-2}\widetilde{V}'(t)= |\widetilde{V}'(t)|^{p-1} \quad\mbox{ for all } t\in (t_0, \infty).
\end{equation*}
This implies
$$
-\widehat{V}'(t)> -\widetilde{V}'(t) \quad\mbox{ for all } t\in (t_0, \infty).
$$
Integrating now over  $[t_0, \infty]$ and using $\widehat{V}(\infty)> \widetilde{V}(\infty)$ we obtain
$$
\widehat{V}(t_0)> \widetilde{V}(t_0)+ (\widehat{V}(\infty)- \widetilde{V}(\infty))> \widetilde{V}(t_0),
$$
which contradicts \eq{th3g}. Hence $A= (0, \infty)$ which shows that $\widehat{W}'(t)> \widetilde{W}'(t)$ for all $t\in (0, \infty)$. Integrating over $[t, \infty]$ we have
$$
|\widehat{W}(t)|= -\widehat{W}(t)> -\widetilde{W}(t)= |\widetilde{W}(t)| \quad\mbox{ for all }  t\in (0, \infty).
$$
Using this estimate and the expression of $\widehat{W}$ and $\widetilde{W}$ in \eq{s1} and \eq{s2} respectively we find
$$
-\widehat{U}'(t)=|\widehat{U}'(t)|> |\widetilde{U}'(t)|=-\widetilde{U}'(t) \quad\mbox{ for all }  t\in (0, \infty).
$$
A further integration over $[t,\infty]$ yields
$$
\widehat{U}(t)> \widetilde{U}(t)+\widehat{U}(\infty)-\widetilde{U}(\infty)> \widetilde{U}(t) \quad\mbox{ for all }  t\in (0, \infty).
$$
This implies
$$
\widehat{u}(r)= (1+\epsilon)u(r)> \widetilde{u}(r) \quad\mbox{ for all } r> 0.
$$
Passing to the limit with $\epsilon \rightarrow 0$ we find $u\geq \widetilde{u}$ in $(0, \infty)$. Also, $\widehat{W}'> \widetilde{W}'$ in $(0, \infty)$ together with \eq{th3d} and \eq{th3e} yield $\widehat{V}> \widetilde{V}$ in $(0, \infty)$. So,
$$
(1+\epsilon)^{\frac{p-1-\alpha}{m}}v(r)> \widetilde{v}(r) \quad\mbox{ for all } r>0.
$$
This also entails (by letting $\epsilon \rightarrow 0$) that $v\geq \widetilde{v}$ in $(0, \infty)$. Now,
we can replace $u$ by $\widetilde{u}$, $v$ by $\widetilde{v}$  to deduce
$$
\widetilde{u}\geq u, \;\;\; \widetilde{v}\geq v \quad \mbox{ in }\quad  (0, \infty).
$$
Thus, $u\equiv \widetilde{u}$ and $v\equiv \widetilde{v}$. This concludes the proof.


\begin{appendices}
\section{Some results for cooperative dynamical systems}

We recall here some results on dynamical systems that we used in the current work.

For any vectors $x=(x_1,x_2,x_3), y=(y_1,y_2,y_3)\in \R^3$ we let
$$
x\leq y\quad\mbox{ if }\quad x_i\leq y_i\,,\;\; i=1,2,3,
$$
$$
x< y\quad\mbox{ if }\quad x_i< y_i\,,\;\; i=1,2,3.
$$
We also define the
the closed interval  $[x,y]=\{u\in \R^3:x\leq u\leq y\}$ and the open interval
$ [[x,y]]=\{u\in \R^3:x\leq u\leq y\}$ with endpoints at $x$ and $y$.

A set $X\subset \R^3$ is said to be $p$-convex if for any $x,y\in X$, the segment line
joining $x$ and $y$ is a subset of $X$. Throughout this section $X$ will be an open $p$-convex subset of $\R^3$.

Let $g:X\to \R^3$ be a $C^1$-vector field. For any $P\in \R^3$ we denote by $\Phi(t,P)$ the maximally defined
solution of the differential equation
\begin{equation}\label{flow}
\frac{d\zeta}{dt}=g(\zeta)
\end{equation}
subject to the initial condition $\zeta(0)=P$. The collection of maps$\{\Phi(t,\cdot)\}$ is called the flow of
the differential equation \eqref{flow}.

\begin{de}
A $C^1$-vector field $g:X\to \R^3$ is said to be cooperative if at any point $P\in X$ we have
$$
\frac{\partial g_i}{\partial x_j}(P)\geq 0\quad \mbox{  for any }\; i,j=1,2,3, \;\; i\neq j.
$$
\end{de}
Cooperative systems enjoy a comparison property of the flows as stated below.

\begin{theorem}\label{comparis}{\rm (See \cite{H1985})}
Assume the $C^1$-vector field $g:X\to \R^3$ is cooperative and let $\zeta, \xi:[0,a]\to \R$, $a>0$, be two
solutions of \eqref{flow} such that
$$
\zeta(0)<\xi(0)\quad (\mbox{ resp.} \zeta(0)\leq \xi(0)\;).
$$
Then
$$
\zeta(t)<\xi(t)\quad (\mbox { resp.}  \zeta(t)\leq \xi(t)\;) \quad \mbox{ for all } t\in [0,a].
$$
\end{theorem}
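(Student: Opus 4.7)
\noindent\textbf{Proof proposal for Theorem \ref{comparis}.} My plan is to reduce the comparison to a linear differential equation for the componentwise difference $\eta(t):=\xi(t)-\zeta(t)$ and then exploit the Metzler structure of the coefficient matrix induced by cooperativity. By the fundamental theorem of calculus applied componentwise,
\[
\eta'(t)=g(\xi(t))-g(\zeta(t))=A(t)\,\eta(t),
\]
where $A(t)$ is the $3\times 3$ matrix with entries $A_{ij}(t)=\int_0^1 \frac{\partial g_i}{\partial x_j}\bigl(\zeta(t)+s\,\eta(t)\bigr)\,ds$. The cooperativity hypothesis yields $A_{ij}(t)\geq 0$ for every $i\neq j$, so $A(t)$ is a Metzler matrix along the solution.

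First I would establish the non-strict case $\zeta(0)\leq \xi(0)\Rightarrow \zeta(t)\leq \xi(t)$ by a perturbation argument. For small $\varepsilon>0$, let $\xi_\varepsilon$ solve $\xi_\varepsilon'=g(\xi_\varepsilon)+\varepsilon\mathbf{1}$ with $\xi_\varepsilon(0)=\xi(0)+\varepsilon\mathbf{1}$, where $\mathbf{1}=(1,1,1)$, so that $\xi_\varepsilon(0)>\zeta(0)$ strictly. I claim $\xi_\varepsilon(t)>\zeta(t)$ on all of $[0,a]$. If not, let $t^{\ast}$ be the first time at which some component $i$ satisfies $\xi_{\varepsilon,i}(t^{\ast})=\zeta_i(t^{\ast})$; at $t^{\ast}$ the remaining components still satisfy $\xi_{\varepsilon,j}(t^{\ast})\geq \zeta_j(t^{\ast})$, and the touching forces $(\xi_{\varepsilon,i}-\zeta_i)'(t^{\ast})\leq 0$. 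However, writing
\[
g_i(\xi_\varepsilon(t^{\ast}))-g_i(\zeta(t^{\ast}))=\int_0^1 \nabla g_i\bigl(\zeta(t^{\ast})+s(\xi_\varepsilon(t^{\ast})-\zeta(t^{\ast}))\bigr)\cdot\bigl(\xi_\varepsilon(t^{\ast})-\zeta(t^{\ast})\bigr)\,ds,
\]
the $\partial_i g_i$ term is multiplied by zero while the remaining cross terms are non-negative by cooperativity. Thus $g_i(\xi_\varepsilon(t^{\ast}))\geq g_i(\zeta(t^{\ast}))$, and consequently $(\xi_{\varepsilon,i}-\zeta_i)'(t^{\ast})\geq \varepsilon>0$, a contradiction. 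Letting $\varepsilon\to 0$ via continuous dependence on initial data and parameters delivers $\xi(t)\geq \zeta(t)$.

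For the strict case $\zeta(0)<\xi(0)$, the non-strict step already gives $\eta(t)\geq 0$ on $[0,a]$. Choose $c>0$ such that $B(t):=A(t)+cI$ has all entries non-negative throughout $[0,a]$; this is possible because $A$ is Metzler and its diagonal entries are continuous on a compact interval. The substitution $\widetilde\eta(t):=e^{ct}\eta(t)$ converts the linear equation into $\widetilde\eta'=B(t)\widetilde\eta$, and since $B(t)\geq 0$ entrywise with $\widetilde\eta\geq 0$, each component $\widetilde\eta_i$ is non-decreasing. Therefore $\widetilde\eta(t)\geq \widetilde\eta(0)=\eta(0)>0$, which yields $\eta(t)\geq e^{-ct}\eta(0)>0$, completing the strict comparison.

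The delicate step I anticipate is the first-touching argument in the non-strict case: cooperativity controls only the off-diagonal contributions of $A(t)$, so everything hinges on the diagonal term vanishing at $t^{\ast}$ (which it does because $\eta_i(t^{\ast})=0$) and on the perturbation $\varepsilon\mathbf{1}$ providing enough slack to contradict $(\xi_{\varepsilon,i}-\zeta_i)'(t^{\ast})\leq 0$. Once the non-strict comparison is secured, the strict conclusion is a direct consequence of the Metzler exponential-shift trick and requires no further finesse.
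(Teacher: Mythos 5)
The paper does not prove this statement at all --- it is quoted from Hirsch \cite{H1985} in the appendix --- so your argument stands on its own, and it is correct. It is the classical Kamke--M\"uller proof: write $\eta'=A(t)\eta$ with $A_{ij}(t)=\int_0^1 \partial_j g_i(\zeta(t)+s\eta(t))\,ds$ Metzler by cooperativity, get the non-strict order from the $\varepsilon\mathbf{1}$-perturbed system via a first-touching-time contradiction (the diagonal term is killed because $\eta_i(t^\ast)=0$, the off-diagonal terms are nonnegative, and the $\varepsilon$ gives strict slack), then upgrade to the strict order with the shift $B(t)=A(t)+cI\geq 0$ and the monotonicity of $\widetilde\eta=e^{ct}\eta$. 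Two points deserve to be made explicit rather than left implicit. First, the integral representation of $A(t)$ requires the segment joining $\zeta(t)$ to $\xi(t)$ (and to $\xi_\varepsilon(t)$) to lie in $X$; this is exactly what the standing $p$-convexity hypothesis on $X$ provides, and it should be invoked. Second, you use $\xi_\varepsilon$ on the whole interval $[0,a]$: since $\xi([0,a])$ is a compact subset of the open set $X$, for $\varepsilon$ small the perturbed solution exists on $[0,a]$, remains in $X$, and converges uniformly to $\xi$ as $\varepsilon\to 0$; this standard continuous-dependence fact is what licenses both the touching argument and the passage to the limit. You also correctly order the two cases: the strict conclusion needs $\eta\geq 0$ (so that $B(t)\widetilde\eta(t)\geq 0$ entrywise), which is supplied by the non-strict case applied first. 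With these remarks the proof is complete.
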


\begin{de}
The equilibrium set of \eqref{flow} is the set $E$ of points $P\in
X$ such that $g(P)=0$. Any such element is called an equilibrium
point of \eqref{flow}. Obviously, $\Phi(t,P)=P$ for any equilibrium
point $P$.
\end{de}

\begin{de}
Let $P\in X$. The $\omega$-limit set $\omega(P)$ is defined as the set of all points $Q\in \R^3$ such that
there exists $\{t_j\}$, $t_j\to \infty$ (as $j\to \infty$) such that $\Phi(t_j, P)\to Q$ (as $j\to \infty$).
\end{de}

The following dichotomy result obtained in \cite{H1985} essentially states that the omega limit sets preserve the
partial order between the elements of $X$ or approach the equilibrium set $E$.

\begin{theorem}\label{dich}{\rm (Limit Set Dichotomy, see \cite[Theorem 3.8]{H1985}, \cite[Theorem 1.16]{HS2005})}

Assume the $C^1$-vector field $g:X\to \R^3$ is cooperative and let $P,Q\in X$, $P<Q$.
Then the following alternative holds:
\begin{enumerate}
\item[(i)] either $\omega(P)<\omega(Q)$;
\item[(ii)] or $\omega(P)=\omega(Q)\subset E$.
\end{enumerate}

\end{theorem}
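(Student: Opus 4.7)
The plan is to exploit three ingredients: the comparison principle of Theorem \ref{comparis}, the invariance of $\omega$-limit sets under the autonomous flow $\Phi$, and the compactness/connectedness of $\omega(P)$ and $\omega(Q)$ when the forward orbits are bounded (for unbounded orbits both $\omega$-limit sets are empty and there is nothing to prove).

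First I would establish a weak order between the two $\omega$-limit sets: for every $\hat P\in\omega(P)$ there exists $\hat Q\in\omega(Q)$ with $\hat P\le\hat Q$, and symmetrically. Pick a sequence $t_j\to\infty$ with $\Phi(t_j,P)\to\hat P$, pass to a subsequence along which the bounded sequence $\Phi(t_j,Q)$ converges to some $\hat Q\in\omega(Q)$, and let $j\to\infty$ in the inequality $\Phi(t_j,P)\le\Phi(t_j,Q)$ provided by Theorem \ref{comparis} and $P<Q$. This yields $\hat P\le\hat Q$.

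Next I would split into cases. If every such pair in fact satisfies $\hat P<\hat Q$ strictly componentwise, alternative (i) is established. Otherwise there exist $\hat P_0\in\omega(P)$ and $\hat Q_0\in\omega(Q)$ with $\hat P_0\le\hat Q_0$ and equality in at least one coordinate. Using the invariance of $\omega(P)$ and $\omega(Q)$ under the flow, together with Theorem \ref{comparis} applied to the solutions $\Phi(\cdot,\hat P_0)$ and $\Phi(\cdot,\hat Q_0)$ of \eqref{flow}, cooperativity (strengthened, in the form used by Hirsch, to strong monotonicity coming from irreducibility of the linearization along the segment $[\hat P_0,\hat Q_0]$) promotes the coordinatewise coincidence to full equality $\Phi(t,\hat P_0)\equiv\Phi(t,\hat Q_0)$ for all $t$ in the existence interval. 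From here, using that every point of an $\omega$-limit set admits a backward orbit staying inside the set, one propagates the coincidence throughout $\omega(P)$ and $\omega(Q)$, obtaining $\omega(P)=\omega(Q)$.

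Finally, to see that the common $\omega$-limit set $\omega:=\omega(P)=\omega(Q)$ is contained in $E$, I would combine translation invariance of the autonomous ODE with monotonicity. If some $\zeta\in\omega$ were not an equilibrium, the slightly shifted trajectory $\Phi(\cdot+s,\zeta)$ for small $s>0$ would lie in $\omega$ and be comparable to $\Phi(\cdot,\zeta)$ via Theorem \ref{comparis}, producing two distinct strictly ordered orbits in $\omega$; comparing $\Phi(t,P)$ with the time-shifted $\Phi(t+s,Q)$ would then separate $\omega(P)$ from $\omega(Q)$, contradicting their equality. Equivalently, an invariant, "weakly totally ordered" set of a strongly monotone flow must consist of equilibria, yielding $\omega\subset E$. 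The main obstacle, carrying the real content of Hirsch's theorem, is the propagation step in the second paragraph: turning equality in a single coordinate of one pair $(\hat P_0,\hat Q_0)$ into coincidence of the entire orbits and hence of the whole $\omega$-limit sets. This is precisely where the strong monotonicity afforded by the irreducibility hypothesis (implicit in the setting of \eqref{th2b}--\eqref{zet} used earlier) enters and makes the dichotomy sharp.
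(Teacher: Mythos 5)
The paper itself does not prove Theorem \ref{dich}: it is quoted verbatim from Hirsch \cite[Theorem 3.8]{H1985} and Hirsch--Smith \cite[Theorem 1.16]{HS2005}, so your proposal has to be judged against those proofs rather than against anything internal to the paper. Measured that way, your first step is fine (the weak ordering: for each $\hat P\in\omega(P)$ some $\hat Q\in\omega(Q)$ with $\hat P\le\hat Q$, obtained from Theorem \ref{comparis}, boundedness and a diagonal subsequence), but the rest has genuine gaps. First, your case split does not match the statement: alternative (i) means \emph{every} point of $\omega(P)$ is strictly below \emph{every} point of $\omega(Q)$, whereas your construction only produces, for each $\hat P$, \emph{one} comparable $\hat Q$; so ``all produced pairs are strict'' does not give (i), and the negation of (i) is not ``some produced pair has equality in a coordinate''. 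Second, and more seriously, the mechanism you propose for the hard case is incorrect: if $\hat P_0\le\hat Q_0$ with $\hat P_0\ne\hat Q_0$ and equality in one coordinate, strong monotonicity (cooperativity plus irreducibility) yields $\Phi(t,\hat P_0)\ll\Phi(t,\hat Q_0)$ for $t>0$, i.e.\ strict separation in all coordinates, not the orbit coincidence $\Phi(t,\hat P_0)\equiv\Phi(t,\hat Q_0)$ you assert; coincidence would require $\hat P_0=\hat Q_0$, which is not what the case assumption gives. Hence the promotion to $\omega(P)=\omega(Q)$ does not follow as written, and you yourself flag this step as ``the real content'' without supplying it. In Hirsch's argument it is carried by a chain of auxiliary results (nonordering of limit sets, the colimiting and absorption principles, the convergence criterion), none of which appear in your sketch.

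The same applies to the last step: the assertion that an invariant, weakly ordered set of a strongly monotone flow must consist of equilibria is essentially equivalent to the difficult half of the dichotomy (it is exactly where the convergence criterion and the nonordering principle are used), so invoking it is circular at the level of this proof. A smaller point worth flagging: the theorem as printed assumes only that $g$ is cooperative, while your argument repeatedly uses irreducibility/strong monotonicity; that is consistent with the hypotheses of \cite{H1985} (and with the system \eqref{th2b}--\eqref{zet} to which the paper applies it), but it means your sketch does not prove the statement in the generality in which it is displayed. In short, the outline identifies the right ingredients but the decisive steps are either deferred or rest on an inference (single-coordinate equality $\Rightarrow$ orbit coincidence) that strong monotonicity actually contradicts.
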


\begin{de}
A $C^1$-vector field $g:X\to \R^3$ is said to be irreducible if at any point $P\in X$ its gradient $\nabla g(P)$ is
an irreducible matrix.
\end{de}

\begin{remark}
Recall that a general $n\times n$ matrix $M$ is irreducible if one of the following equivalent conditions holds:
\begin{enumerate}
\item[(i)] for any nontrivial partition $I\cup J$ of the set $\{1,2\dots,n\}$ there exists $i\in I$, $j\in J$ such that $M_{ij}\neq 0$;
\item[(ii)] the digraph associated with $M$, that is, the oriented graph with vertices at $1,2,\dots,n$ which connects $(i,j)$ if and only if $M_{ij}\neq 0$, is strongly connected.
\end{enumerate}
\end{remark}

The compact omega limit sets of cooperative and irreducible vector fields have a particular property in the
sense that they approach the equilibrium set for almost all points in $X$. This is formulated in the result below.
\begin{theorem}\label{lebesgue}{\rm (See \cite[Theorem 4.1]{H1985})}

Assume the $C^1$-vector field $g:X\to \R^3$ is cooperative and
irreducible and that for all $P\in X$ the $\omega$-limit set
$\omega(P)$ is compact. Then, there exists $\Sigma\subset X$ with
zero Lebesgue measure such that
$$
\omega(P)\subset E\quad\mbox{ for all }\quad P\in X\setminus \Sigma.
$$
\end{theorem}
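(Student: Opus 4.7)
The plan follows Hirsch's approach in \cite{H1985}: I would show that the ``bad'' set
\[
 B:=\{P\in X:\omega(P)\not\subset E\}
\]
has three-dimensional Lebesgue measure zero, so $\Sigma:=B$ (or any null superset of $B$) does the job. The crucial preparatory ingredient is \emph{strong monotonicity} of the flow. From cooperativity alone, Kamke's theorem already yields $P\le Q\Rightarrow \Phi(t,P)\le\Phi(t,Q)$ for $t\ge 0$ (this is essentially what Theorem \ref{comparis} records). Irreducibility of $\nabla g(\Phi(t,P))$ along each orbit, combined with a Perron--Frobenius argument applied to the linear cooperative irreducible variational equation, upgrades this to
\[
 P<Q\quad\Longrightarrow\quad \Phi(t,P)\ll \Phi(t,Q)\quad\text{for every } t>0.
\]

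With strong monotonicity in hand, the Limit Set Dichotomy (Theorem \ref{dich}) gives order-rigidity on $B$: for any $P,Q\in B$ with $P<Q$, the alternative $\omega(P)=\omega(Q)\subset E$ is incompatible with $P,Q\in B$, so the second alternative $\omega(P)<\omega(Q)$ must hold; in particular the compact sets $\omega(P),\omega(Q)$ are disjoint and strictly separated in the product order. To pass from this qualitative statement to a measure-zero conclusion I would use a slicing reduction: fix a strictly positive direction $e=(1,1,1)\gg 0$ and, by Fubini, reduce the claim $|B|_3=0$ to showing that the trace $B\cap \ell$ has one-dimensional measure zero for every line $\ell=P_0+\R e$ meeting $X$.

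The one-dimensional step is the crux. Parametrize $\ell\cap X$ by $s\mapsto P_s:=P_0+se$ and, for each coordinate $i\in\{1,2,3\}$, consider the monotone scalar proxies
\[
 \varphi_i(s):=\max\bigl\{q_i:q\in\omega(P_s)\bigr\},
\]
which are well-defined by compactness of $\omega(P_s)$ and strictly increasing on $B\cap\ell$ by the order-rigidity above. Extend each $\varphi_i$ to a bounded monotone function on the whole parameter interval (boundedness uses that $\omega$-limit sets stay in a common compactum over compact parameter ranges, by continuity of the flow). A Vitali covering argument, which exploits the strong monotonicity of the time-$t$ map $\Phi(t,\cdot)$ applied to small order-intervals of the form $[P_s-\rho e,\,P_s+\rho e]$, then forces $B\cap\ell$ to be null. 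The main obstacle is precisely this last covering argument: strict monotonicity of a scalar function does not by itself imply measure zero of the underlying set (cf.\ Cantor-function behaviour), so one must exploit strong monotonicity of the flow at the level of the time-$t$ maps to rule out a Cantor-like accumulation of strictly ordered compact $\omega$-limit sets, and to do this uniformly enough over slices that the Fubini conclusion survives.
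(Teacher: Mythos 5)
First, a remark on scope: the paper does not prove this statement at all --- it is quoted verbatim from Hirsch \cite[Theorem 4.1]{H1985} as background material in the appendix, so there is no in-paper proof to compare against. Judged against the standard (Hirsch) argument, your proposal has the right architecture --- strong monotonicity from irreducibility, the Limit Set Dichotomy to get strict ordering of $\omega$-limit sets of points in the bad set $B$, and a Fubini reduction to lines in a direction $e\gg 0$ --- but it stalls exactly where you say it does, and the repair you gesture at (a Vitali covering argument built on the time-$t$ maps) is not the mechanism that closes the proof.

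The missing idea is elementary and makes the one-dimensional step much stronger than ``measure zero'': $B\cap\ell$ is \emph{countable}. If $P_s\in B$, then $\omega(P_s)$ is a compact invariant set containing a non-equilibrium, hence it is not a singleton (a one-point invariant set is an equilibrium); therefore for at least one coordinate $i=i(s)$ the interval $\bigl(u_i(s),\varphi_i(s)\bigr)$, with $u_i(s):=\min\{q_i:q\in\omega(P_s)\}$ and $\varphi_i(s)$ your maximum, is a nonempty open interval. Moreover, for $s<s'$ with at least one of $P_s,P_{s'}$ in $B$, the dichotomy forces $\omega(P_s)<\omega(P_{s'})$ (the equality alternative is excluded), so $\varphi_i(s)<u_i(s')$ for every $i$; hence for each fixed $i$ the intervals $\{(u_i(s),\varphi_i(s)):s\in B\cap\ell,\ i(s)=i\}$ are pairwise disjoint and nonempty, and there are only countably many of them. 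Summing over $i=1,2,3$ gives countability of $B\cap\ell$, and Fubini (legitimate because $B$ is Borel: $\omega(P)\subset E$ is equivalent to $\operatorname{dist}(\Phi(t,P),E)\to 0$, a Borel condition in $P$) yields $|B|_3=0$. This is precisely why no Cantor-function pathology can occur: each bad parameter consumes an interval of positive length in the range of a monotone proxy, rather than merely being a point of strict increase. Without this observation your argument, as written, does not reach the conclusion.
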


\end{appendices}

\end{document}